\numberwithin{equation}{section}
\renewcommand{\d}{\partial}
\newcommand{\eps}{\varepsilon}
\newcommand{\N}{\mathbb{N}}
\renewcommand{\Re}{\text{Re}}
\newcommand\LA{\left\langle}
\newcommand\RA{\right\rangle}
\newcommand{\RM}{{\mathbb{R}}}
\newcommand{\CM}{{\mathbb{C}}}
\newcommand{\NM}{{\mathbb{N}}}
\newcommand{\ZM}{{\mathbb{Z}}}
\newtheorem{theorem}{Theorem}[section]
\newtheorem{proposition}[theorem]{Proposition}
\newtheorem{corollary}[theorem]{Corollary}
\newtheorem{lemma}[theorem]{Lemma}
\newtheorem{remark}[theorem]{Remark}
\theoremstyle{definition}
\newtheorem{definition}[theorem]{Definition}
\title{Subharmonic Dynamics of Wave Trains in Reaction Diffusion Systems}
\author{Mathew~A.~Johnson\thanks{Department of Mathematics, University of Kansas, 1460 Jayhawk Boulevard, 
Lawrence, KS 66045, USA; matjohn@ku.edu}\quad\&\quad Wesley R. Perkins\thanks{Department of Mathematics, University of Kansas, 1460 Jayhawk Boulevard, 
Lawrence, KS 66045, USA; wesley.perkins@ku.edu} }
\date{\today}
\begin{document}

\maketitle

\begin{abstract}
We investigate the stability and nonlinear local dynamics of spectrally stable wave trains in reaction-diffusion systems.   
For each $N\in\NM$, such $T$-periodic traveling waves are easily seen to be nonlinearly asymptotically stable (with asymptotic phase)
with exponential rates of decay when subject to $NT$-periodic, i.e., subharmonic, perturbations.  However, both the allowable size of perturbations and the exponential
rates of decay depend on $N$, and, in particular, they tend to zero as $N\to\infty$, leading to a lack of uniformity in such subharmonic stability results.
In this work, we build on recent work by the authors and introduce a methodology that allows us to achieve a stability result for subharmonic perturbations which is uniform in $N$.
Our work is motivated by the dynamics of such waves when subject to perturbations which are localized (i.e. integrable on the line), which has recently received considerable
attention by many authors.
\end{abstract}

\section{Introduction}\label{S:intro}

In this work, we consider the local dynamics of periodic traveling wave solutions, i.e. wave trains, in reaction diffusion systems of the form
\begin{equation}\label{e:rd}
    u_t = u_{xx} + f(u),~~x\in\RM,~~t\geq 0,~~u\in\RM^n
\end{equation}
where $n\in\NM$ and $f:\RM^n\to\RM^n$ is a $C^K$-smooth nonlinearity for some $K\geq 3$.
Such systems arise naturally in many areas of applied mathematics, and the behavior of such wave train solutions when subject to a variety of classes of perturbations has been studied intensively
over the last decade.  Most commonly in the literature, one studies the stability and instability of such periodic traveling waves to perturbations
which are \emph{localized}, i.e. integrable on the line, or which are \emph{nonlocalized}, accounting for asymptotic phase differences at infinity.  
See, for example, \cite{DSSS,JNRZ_13_1,JNRZ_13_2,JZ_11_1,SSSU,SW15} and references therein.

Here, we consider the stability and long-time dynamics of $T$-periodic traveling wave solutions of \eqref{e:rd} when subjected to 
$NT$-periodic, i.e. \emph{subharmonic}, perturbations for some $N\in\NM$.  More precisely, suppose that $u(x,t)=\phi(k(x-ct))$ is a periodic
 traveling wave solution of \eqref{e:rd} with period $T=1/k$, where we choose
$k\in\RM$ so that the profile $\phi\in H^1_{\rm loc}(\RM)$ is a $1$-periodic stationary solution of
\begin{equation}\label{e:RDE_trav}
    k u_t - kc u_x = k^2 u_{xx} + f(u),
\end{equation}
i.e. it satisfies the profile equation
\begin{equation}\label{e:profile}
k^2\phi''+kc\phi'+f(\phi)=0.
\end{equation}
Given such a solution, note that a function of the form $u(x,t)=\phi(x)+v(x,t)$ is a solution of \eqref{e:RDE_trav}
provided it satisfies a system of the form
\begin{equation}\label{e:rd_nonlinear}
kv_t=k\mathcal{L}[\phi]v+\mathcal{N}(v),
\end{equation}
where here $\mathcal{N}(v)$ is at least quadratic in $v$ and $\mathcal{L}[\phi]$ is the linear differential operator
\[
k\mathcal{L}[\phi]:=k^2\partial_x^2+kc\partial_x+Df(\phi).
\]
Naturally, the domain of the operator $\mathcal{L}[\phi]$ is determined by the chosen class of perturbations $v$ of the underlying standing wave $\phi$
and, as mentioned above, several choices are available in the literature.
As we are interested in subharmonic perturbations, i.e. perturbations with period $N\in\NM$, we consider $\mathcal{L}[\phi]$ as a closed, densely defined linear
operator acting on $L^2_{\rm per}(0,N)$ with $1$-periodic coefficients.

The stability analysis of periodic waves to such subharmonic perturbations naturally relies on a detailed understanding of the 
spectrum of $\mathcal{L}[\phi]$ acting on $L^2_{\rm per}(0,N)$.  
To describe the $N$-periodic spectrum of $\mathcal{L}[\phi]$, we begin by introducing the notion of spectral stability
that will be used throughout this work.

\begin{definition}\label{D:specstab}
A $1$-periodic stationary solution $\phi\in H^1_{\rm loc}(\RM)$ of \eqref{e:RDE_trav} is said to be \emph{diffusively spectrally stable} provided the following conditions hold:
\begin{itemize}
\item[(i)] The spectrum of the linear operator $\mathcal{L}[\phi]$ acting on $L^2(\RM)$ satisfies
\[
\sigma_{L^2(\RM)}\left(\mathcal{L}[\phi]\right)\subset\left\{\lambda\in\CM:\Re(\lambda)<0\right\}\cup\{0\};
\]
\item[(ii)] There exists a $\theta>0$ such that for any $\xi\in[-\pi,\pi)$ the real part of the spectrum of the Bloch operator $\mathcal{L}_\xi[\phi]$ acting
on $L^2_{\rm per}(0,1)$ satisfies
\[
\Re\left(\sigma_{L^2_{\rm per}(0,1)}\left(\mathcal{L}_\xi[\phi]\right)\right)\leq-\theta\xi^2;
\]
\item[(iii)] $\lambda=0$ is a simple eigenvalue of $\mathcal{L}_0[\phi]$ with associated eigenfunction $\phi'$.
\end{itemize}
\end{definition}

Since the pioneering work of Schnieder \cite{S96,S98_1}, the above notion of spectral stability has been taken as the standard spectral assumption in nonlinear stability
results for periodic traveling/standing waves in reaction diffusion systems.   Specifically,
the above notion of spectral stability is sufficiently strong to allow one to immediately conclude important details regarding the nonlinear dynamics of $\phi$
under localized, or general bounded, perturbations, including long-time asymptotics of the associated modulation functions.  
For more information, see \cite{DSSS,JNRZ_13_1,JNRZ_13_2,SSSU,SW15} and references therein.

\begin{remark}
Note the assumption on simplicity of the eigenvalue $\lambda=0$ is natural since such periodic standing waves typically appear as one-parameter families
parametrized only by translational invariance.  Indeed, solutions of \eqref{e:profile} are readily seen to rely (up to translation invariance) on the $n+2$ parameters $(\phi(0),k,c)$,
while periodicity requires the enforcement of $n$ constraints, leaving in general a two-parameter family of $1$-periodic solutions
\[
u(x-ct-x_0;c,x_0)
\]
which satisfy \eqref{e:profile} with $k=k(c)$.  Due to the secular dependence of the frequency $k$ on the wave speed $c$, variations in $c$ do not preserve
periodicity and hence, generically, it follows one should expect the kernel of $\mathcal{L}[\phi]$ to be one-dimensional, which leads to (iii) in
Definition \eqref{D:specstab} above.
\end{remark}

Given a diffusively spectrally stable $1$-periodic traveling wave solution $\phi$ of \eqref{e:rd}, one can now easily characterize the spectrum of $\mathcal{L}[\phi]$
acting on $L^2_{\rm per}(0,N)$.  Indeed, as described in Section \ref{S:bloch} below, the spectrum of $\mathcal{L}[\phi]$ acting on $L^2_{\rm per}(0,N)$
is equal to the union of the necessarily discrete\footnote{Note since the domains of the operators $\mathcal{L}_\xi[\phi]$ are compactly contained
in $L^2_{\rm per}(0,1)$, it follows that their $L^2_{\rm per}(0,1)$-spectrum is comprised entirely of isolated eigenvalues
with finite multiplicities.} spectrum of the corresponding Bloch operators $\mathcal{L}_\xi[\phi]$, defined in Definition \ref{D:specstab} above,
acting in $L^2_{\rm per}(0,1)$ for the discrete (finite) subset of $\xi\in[-\pi,\pi)$ such that $e^{i\xi N}=1$.  It follows that diffusively spectrally stable
periodic traveling waves of \eqref{e:rd} are necessarily spectrally stable to all subharmonic perturbations.  In particular, for each $N\in\NM$ the
non-zero $N$-periodic eigenvalues of $\mathcal{L}[\phi]$ satisfy the spectral gap condition
\[
\Re\left(\sigma_{L^2_{\rm per}(0,N)}\left(\mathcal{L}[\phi]\right)\setminus\{0\}\right)\leq -\delta_N
\]
for some constant $\delta_N>0$.  From here, using that $\mathcal{L}[\phi]$ is sectorial, it is easy to show that for each $\delta\in(0,\delta_N)$ there exists a constant
$C_\delta>0$ such that
\begin{equation}\label{e:lin_exp}
\left\|e^{\mathcal{L}[\phi]t}\left(1-\mathcal{P}_1\right)f\right\|_{L^2_{\rm per}(0,N)}\leq C_\delta e^{-\delta t}\|f\|_{L^2_{\rm per}(0,N)}.
\end{equation}
for all $f\in L^2_{\rm per}(0,N)$, where here $\mathcal{P}_1$ denotes the projection of $L^2_{\rm per}(0,N)$ onto the $N$-periodic kernel of $\mathcal{L}[\phi]$
spanned by $\phi'$.  Equipped with this linear estimate, one can now establish the following nonlinear stability result.

\begin{proposition}\label{P:sub_stab}
Let $\phi\in H^1_{\rm loc}$ be a $1$-periodic stationary solution of \eqref{e:RDE_trav} and fix $N\in\NM$.  Assume that $\phi$ is diffusively spectrally stable, in the
sense of Definition \ref{D:specstab} below and, for each $N\in\NM$, take $\delta_N>0$ such that
\begin{equation}\label{spec_gap}
\max \Re\left(\sigma_{L^2_{\rm per}(0,N)}\left(\mathcal{L}[\phi]\right)\setminus\{0\}\right)=-\delta_N
\end{equation}
holds.  Then for each $N\in\NM$, $\phi$ is asymptotically stable
to subharmonic $N$-periodic perturbations.  More precisely, for every $\delta\in(0,\delta_N)$ there exists an $\eps=\eps_\delta>0$ and a constant
$C=C_\delta>0$ such that whenever $u_0\in H^1_{\rm per}(0,N)$ and $\|u_0-\phi\|_{H^1(0,N)}<\eps$, then the solution $u$ of \eqref{e:RDE_trav} with
initial data $u(0)=u_0$ exists globally in time and satisfies
\[
\left\|u(\cdot,t)-\phi(\cdot+\sigma_\infty)\right\|_{H^1(0,N)}\leq Ce^{-\delta t}\|u_0-\phi\|_{H^1(0,N)}
\]
for all $t>0$, where here $\sigma_\infty=\sigma_\infty(N)$ is some constant.
\end{proposition}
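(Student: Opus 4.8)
The plan is to combine the linear semigroup estimate \eqref{e:lin_exp} with a standard phase--modulation decomposition and a Duhamel/bootstrap argument, all carried out on the \emph{fixed} Hilbert space $L^2_{\mathrm{per}}(0,N)$; since uniformity in $N$ is not required in this proposition, Sobolev embedding constants (which degenerate as $N\to\infty$) may be used freely. First I would invoke standard semilinear parabolic theory to obtain local well-posedness of \eqref{e:RDE_trav} in $H^1_{\mathrm{per}}(0,N)$, a blow-up alternative, and $C^1$ regularity of solutions valued in $L^2_{\mathrm{per}}(0,N)$. Let $\psi\in H^1_{\mathrm{per}}(0,1)$ span the (one-dimensional, by diffusive spectral stability) $N$-periodic kernel of $\mathcal{L}[\phi]^*$, normalized so that $\langle\psi,\phi'\rangle=1$, hence $\mathcal{P}_1 f=\langle\psi,f\rangle\phi'$. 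For $u$ near $\phi$ in $H^1_{\mathrm{per}}(0,N)$, the implicit function theorem applied to $G(\sigma,u):=\langle\psi,u-\phi(\cdot+\sigma)\rangle$ (using $\partial_\sigma G(0,\phi)=-1$) produces a unique small phase $\sigma=\sigma(u)$ with $G(\sigma(u),u)=0$ and $|\sigma(u)|\lesssim\|u-\phi\|_{H^1}$; along a solution that stays near the orbit of $\phi$ this yields a $C^1$ function $\sigma(t)$ such that the modulated perturbation $w(t):=u(t)-\phi(\cdot+\sigma(t))$ satisfies $\mathcal{P}_1 w(t)\equiv0$.

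Next I would differentiate and use that each translate $\phi(\cdot+\sigma)$ solves \eqref{e:profile} to derive the coupled system
\[
k w_t = k\mathcal{L}[\phi]w + \mathcal{R}(w,\sigma) - k\dot\sigma\,\phi'(\cdot+\sigma),\qquad
k\dot\sigma = \frac{\langle\psi,\mathcal{R}(w,\sigma)\rangle}{\langle\psi,\phi'(\cdot+\sigma)\rangle},
\]
the second identity obtained by applying $\mathcal{P}_1$ to the first and using $\mathcal{P}_1 w_t\equiv0$, $\mathcal{P}_1\mathcal{L}[\phi]=0$ (since $\langle\psi,\mathcal{L}[\phi]w\rangle=\langle\mathcal{L}[\phi]^*\psi,w\rangle=0$), and $\langle\psi,\phi'(\cdot+\sigma)\rangle=1+O(\sigma)$, where
\[
\mathcal{R}(w,\sigma)=\Big(f(\phi(\cdot+\sigma)+w)-f(\phi(\cdot+\sigma))-Df(\phi(\cdot+\sigma))w\Big)+\Big(Df(\phi(\cdot+\sigma))-Df(\phi)\Big)w
\]
satisfies $\|\mathcal{R}(w,\sigma)\|_{L^2}\lesssim(\|w\|_{H^1}+|\sigma|)\|w\|_{H^1}$ and is locally Lipschitz from $H^1$ to $L^2$ (using $\phi\in C^K$ by elliptic bootstrapping of \eqref{e:profile} and $H^1_{\mathrm{per}}(0,N)\hookrightarrow L^\infty$); consequently $|\dot\sigma|\lesssim(\|w\|_{H^1}+|\sigma|)\|w\|_{H^1}$ as well. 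Since $\mathcal{P}_1 w\equiv0$, Duhamel's formula gives $w(t)=e^{\mathcal{L}[\phi]t}w(0)+\frac1k\int_0^t e^{\mathcal{L}[\phi](t-s)}(1-\mathcal{P}_1)\big[\mathcal{R}(w,\sigma)-k\dot\sigma\,\phi'(\cdot+\sigma)\big](s)\,ds$.

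Then, fixing $\delta\in(0,\delta_N)$ and choosing $\delta'\in(\delta,\delta_N)$, I would upgrade \eqref{e:lin_exp} via sectoriality of $\mathcal{L}[\phi]$ to the $H^1$ smoothing bound $\|e^{\mathcal{L}[\phi]t}(1-\mathcal{P}_1)g\|_{H^1}\le Ce^{-\delta' t}(1+t^{-1/2})\|g\|_{L^2}$ for $t>0$, and run the usual continuity (bootstrap) argument on the a priori hypotheses $\|w(s)\|_{H^1}\le C_0\eps\, e^{-\delta s}$ and $|\sigma(s)|\le C_0\eps$ on $[0,t]$. Inserting these into Duhamel, extracting $e^{\delta s}\|w(s)\|_{H^1}$ from the quadratic terms and writing $e^{\delta t}=e^{\delta(t-s)}e^{\delta s}$, the quantity $\Theta(t):=\sup_{0\le s\le t}e^{\delta s}\|w(s)\|_{H^1}$ obeys $\Theta(t)\le C\|w(0)\|_{H^1}+CC_0\eps\,\Theta(t)\int_0^\infty e^{-(\delta'-\delta)\tau}(1+\tau^{-1/2})\,d\tau$, so that for $\eps$ small enough $\Theta(t)\le 2C\|w(0)\|_{H^1}\lesssim\|u_0-\phi\|_{H^1}$, i.e. $\|w(t)\|_{H^1}\lesssim e^{-\delta t}\|u_0-\phi\|_{H^1}$. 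The bound $|\dot\sigma|\lesssim\eps\|w\|_{H^1}$ then makes $\dot\sigma$ integrable in $t$, hence $\sigma(t)\to\sigma_\infty$ for some constant $\sigma_\infty=\sigma_\infty(N)$ with $|\sigma(t)-\sigma_\infty|\lesssim e^{-\delta t}\|u_0-\phi\|_{H^1}$, while $|\sigma(0)|\lesssim\|u_0-\phi\|_{H^1}$ and $\|w(0)\|_{H^1}\le\|u_0-\phi\|_{H^1}+\|\phi-\phi(\cdot+\sigma(0))\|_{H^1}\lesssim\|u_0-\phi\|_{H^1}$ keep the a priori bounds strictly improved, closing the bootstrap and giving global existence. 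Finally, writing $u(\cdot,t)-\phi(\cdot+\sigma_\infty)=w(\cdot,t)+\big(\phi(\cdot+\sigma(t))-\phi(\cdot+\sigma_\infty)\big)$ and bounding both terms by $O\!\big(e^{-\delta t}\|u_0-\phi\|_{H^1}\big)$ in $H^1_{\mathrm{per}}(0,N)$ gives the claimed estimate.

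I expect no deep obstacle: the genuine care lies in setting up the modulated decomposition so that $\mathcal{P}_1 w\equiv0$ is maintained, and in pushing the exponential rate through Duhamel --- the $(1+\tau^{-1/2})$ kernel forces one to allow a strictly larger decay rate $\delta'$ in the linear smoothing estimate in order to absorb it. The real difficulty, which this proposition deliberately sidesteps, is that $\delta_N\to0$ and the embedding constants used above blow up as $N\to\infty$; removing that non-uniformity is the subject of the remainder of the paper.
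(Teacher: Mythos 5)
Your proposal is correct and is precisely the standard time-dependent phase modulation argument that the paper sketches and defers to \cite[Chapter 4]{KP_book}: introduce $\sigma(t)$ via the implicit function theorem so that the residual $w$ lies in the range of $(1-\mathcal{P}_1)$, apply Duhamel with the exponentially decaying semigroup estimate \eqref{e:lin_exp} (upgraded to an $H^1$-smoothing bound via sectoriality, at a slightly faster auxiliary rate $\delta'$ to absorb the $t^{-1/2}$ singularity), and close by a continuity/bootstrap argument. The details you supply match what the paper expects the reader to reconstruct, and your closing remark correctly identifies why this argument is non-uniform in $N$.
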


The proof of Proposition \ref{P:sub_stab} is by now standard, and can be completed by following appropriate texts: see, for example, \cite[Chapter 4]{KP_book}.
The main idea is that the linear estimate \eqref{e:lin_exp} suggests that if $u(x,t)$ is a solution of \eqref{e:RDE_trav} which is initially close 
to $\phi$ in $L^2_{\rm per}(0,N)$, then there exists a (small) time-dependent modulation function $\sigma(t)$ such that $u(x,t)$ essentially behaves
for large time as
\[
u(x,t)\approx \phi(x)+\sigma(t)\phi'(x)\approx \phi(x+\sigma(t)),
\]
corresponding to standard asymptotic (orbital) stability of $\phi$.  With this insight gained from  \eqref{e:lin_exp}, 
a straightforward nonlinear iteration scheme completes the proof of Proposition \ref{P:sub_stab}.

While Proposition \ref{P:sub_stab} establishes nonlinear stability of $\phi$ in $L^2_{\rm per}(0,N)$ for each fixed $N\in\NM$, it lacks uniformity in $N$ in two important (and related) aspects.
Indeed, note that the exponential rate of decay $\delta$ and the allowable size of initial perturbations $\eps=\eps_\delta$ are both controlled completely in terms
of the size of the spectral gap $\delta_N>0$.  Since $\delta_N\to 0$ as $N\to\infty$, it follows that both $\delta$ and $\eps$ chosen in Proposition \ref{P:sub_stab}
necessarily tend to zero\footnote{Additionally, this degeneracy can be seen in the linear estimate \eqref{e:lin_exp} since both $\delta\to 0^+$ and $C_\delta\to\infty$ as $N\to\infty$.} 
as $N\to\infty$.  With this observation in mind, it is natural to ask if one can obtain a stability result to $N$-periodic perturbations which
is uniform in $N$.  In such a result, one should naturally require that both the rate of decay and and the size of initial perturbations be independent of $N$, thus 
depending only on the background wave $\phi$.  This is precisely achieved in our main result.

\begin{theorem}[Uniform Subharmonic Asymptotic Stability]\label{T:main}  Fix\footnote{Here and throughout, $K$ encodes the regularity
of the nonlinearity $f$ in \eqref{e:rd}.} $K\geq 3$.
Suppose $\phi\in H^1_{\rm loc}(\RM)$ is a $1$-periodic stationary solution of \eqref{e:RDE_trav} that is diffusively spectrally stable, in the sense of Definition \ref{D:specstab}.  
There exists an $\eps>0$ and a constant $C>0$ such that, for each $N\in\NM$,
 whenever $u_0\in L^1_{\rm per}(0,N)\cap H^K_{\rm per}(0,N)$ and 
\[
E_0:=\left\|u_0-\phi\right\|_{L^1_{\rm per}(0,N)\cap H^K_{\rm per}(0,N)}<\eps, 
\]
 there exists a function $\widetilde{\psi}(x,t)$ satisfying $\widetilde{\psi}(\cdot,0)\equiv 0$ such that the solution of \eqref{e:RDE_trav} with initial
 data $u(0)=u_0$ exists globally in time and satisfies
\begin{equation}\label{e:result1}
\left\|u\left(\cdot-\widetilde{\psi}(\cdot,t),t\right)-\phi\right\|_{H^K_{\rm per}(0,N)},~~\left\|\nabla_{x,t}\widetilde{\psi}(\cdot,t)\right\|_{H^K_{\rm per}(0,N)}\leq C E_0(1+t)^{-3/4}
\end{equation}
for all $t\geq 0$.  Further, there exists constants $\gamma_\infty\in\RM$ and $C>0$ such that for each $N\in\NM$ we have
\begin{equation}\label{e:result2}
\left\|\widetilde{\psi}(\cdot,t)-\frac{1}{N}\gamma_\infty\right\|_{H^K_{\rm per}(0,N)}\leq C E_0(1+t)^{-1/4}
\end{equation}
for all $t\geq 0$.
\end{theorem}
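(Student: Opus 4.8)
The plan is to mimic the localized-perturbation theory on the line, but executed entirely on the torus $\RM/N\ZM$ with all constants tracked so as to be uniform in $N$. The starting point is the Bloch decomposition: since $\mathcal{L}[\phi]$ acting on $L^2_{\rm per}(0,N)$ decomposes over the discrete Bloch frequencies $\xi \in \frac{2\pi}{N}\ZM \cap [-\pi,\pi)$, the semigroup $e^{\mathcal{L}[\phi]t}$ on $L^2_{\rm per}(0,N)$ is a discrete sum over $\xi$ of the Bloch-wise semigroups $e^{\mathcal{L}_\xi[\phi]t}$. Using diffusive spectral stability (in particular the parabolic spectral bound $\Re\sigma(\mathcal{L}_\xi[\phi]) \le -\theta\xi^2$ near $\xi = 0$, the simplicity of the zero eigenvalue, and the smooth spectral perturbation expansion $\lambda(\xi) = -i a \xi - d\xi^2 + O(\xi^3)$ of the critical Bloch mode), I would first establish $N$-uniform linear estimates: splitting $e^{\mathcal{L}[\phi]t} = S_p(t) + \tilde S(t)$ into a ``low-frequency/critical-mode'' piece $S_p(t)$ carrying the translational zero-mode and a fast-decaying remainder $\tilde S(t)$. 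The key point is that the Riemann-sum-over-$\xi$ bounds for $S_p(t)$ — which on the line would be integrals $\int e^{-\theta\xi^2 t}\,d\xi \sim t^{-1/2}$ — are bounded by the corresponding integrals \emph{uniformly in $N$}, since a spacing-$\frac{2\pi}{N}$ Riemann sum of a decreasing bump function is controlled by its integral plus the value at $\xi = 0$. This yields exactly the $L^1 \to L^2$ and $L^1\cap H^K \to H^K$ decay at rate $(1+t)^{-1/4}$ for $S_p(t)$ and its $x$-derivative at rate $(1+t)^{-3/4}$, plus the $N$-term at $\xi = 0$ accounting for the nondecaying constant $\frac{1}{N}\gamma_\infty$ in \eqref{e:result2}, with $\tilde S(t)$ decaying exponentially at an $N$-independent rate by condition (ii).

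Next I would set up the nonlinear iteration using the standard ``nonlinear decomposition'' / modulated-phase Ansatz. Rather than writing $u = \phi + v$, one writes $u(x-\tilde\psi(x,t),t) = \phi(x) + \tilde v(x,t)$, which after substitution into \eqref{e:RDE_trav} gives an equation of the form $k\partial_t\tilde v = k\mathcal{L}[\phi]\tilde v + \partial_x\big(\text{quadratic in } \tilde v,\ \nabla\tilde\psi\big) + (\partial_t - \mathcal{L})$-type remainder terms, where the crucial gain is that the worst nonlinear terms appear in divergence form (so one derivative can be moved onto the fast-decaying $S_p' $ part of the semigroup). One then chooses the phase modulation $\tilde\psi$ via a Duhamel-type formula $\tilde\psi(t) = -\int_0^t s_p(t-s)\mathcal{N}(s)\,ds$ using the scalar ``phase part'' $s_p$ of $S_p$, designed so that the nondecaying zero-mode contribution is entirely absorbed into $\tilde\psi$, leaving $\tilde v$ governed by $\tilde S$ plus $\partial_x S_p$ acting on nonlinear source terms — both of which decay. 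The template for this on the line is \cite{JNRZ_13_1,JNRZ_13_2}; here everything is transcribed to $L^2_{\rm per}(0,N)$ and, because all the linear bounds above are $N$-uniform, the resulting Gronwall/fixed-point argument closes with $\eps$ and $C$ independent of $N$.

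Concretely, I would define the iteration norm
\[
\zeta(t) := \sup_{0\le s\le t}\Big[(1+s)^{1/4}\|\tilde v(\cdot,s)\|_{H^K_{\rm per}(0,N)} + (1+s)^{1/4}\|\nabla_{x,t}\tilde\psi(\cdot,s)\|_{H^K_{\rm per}(0,N)} + \|\tilde\psi(\cdot,s) - \tfrac{1}{N}\gamma(s)\|_{H^K_{\rm per}(0,N)}\Big],
\]
where $\gamma(s)$ is the (bounded, convergent) scalar determined by the mean of the nonlinear forcing, and show via the Duhamel representation and the $N$-uniform linear estimates that $\zeta(t) \lesssim E_0 + \zeta(t)^2$, so that $\zeta(t) \le C E_0$ for $E_0 < \eps$ small (with $\eps, C$ depending only on $\phi$ through $\theta$, the spectral gap away from $\xi=0$, and the Bloch expansion coefficients). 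Passing $s\to\infty$ in the formula for $\gamma(s)$ produces $\gamma_\infty$, and unwinding the Ansatz $u(x-\tilde\psi,t) = \phi + \tilde v$ gives \eqref{e:result1}–\eqref{e:result2}. I expect the main obstacle to be precisely the $N$-uniformity of the linear estimates: one must verify that replacing the inverse-Bloch-transform \emph{integral} over $\xi\in[-\pi,\pi)$ by a \emph{finite sum} over $\xi\in\frac{2\pi}{N}\ZM$ does not degrade the $t^{-1/4}$-type decay rates or blow up the constants — which requires a careful Riemann-sum-versus-integral comparison for the Gaussian-like Bloch multipliers, handled separately in the regime of small $\xi$ (where the parabolic bound and smooth expansion apply) and bounded-away-from-zero $\xi$ (where exponential decay is uniform). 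A secondary technical point is local well-posedness and the standard short-time regularity/continuation argument in $H^K_{\rm per}(0,N)$, which is routine but must also be checked to be $N$-uniform on the relevant time scale.
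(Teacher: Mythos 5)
Your overall strategy matches the paper's: Bloch decomposition of $e^{\mathcal{L}[\phi]t}$ over $\Omega_N$, $N$-uniform linear estimates obtained by controlling the Riemann sums $\frac{1}{N}\sum_{\xi\in\Omega_N}|\xi|^{2r}e^{-2d\xi^2 t}$ by the corresponding integrals (the paper's Lemma \ref{L:sum_poly_bd}), a phase-modulated ansatz $u(x-\widetilde\psi(x,t),t)=\phi(x)+v(x,t)$, an implicit Duhamel definition of $\widetilde\psi$ to absorb the critical part $s_{p,N}$ of the semigroup, and a weighted iteration norm $\zeta$. You also correctly flag that the $\xi=0$ term must be separated out and estimated differently; the paper implements this by writing $\widetilde\psi=\tfrac{1}{N}\gamma(t)+\psi(x,t)$ and crucially estimates $\gamma_t=\langle\widetilde\Phi_0,\mathcal{N}\rangle_{L^2_N}$ by an $L^\infty$--$L^1$ pairing rather than Cauchy--Schwarz, since $\|\widetilde\Phi_0\|_{L^2_N}=\mathcal{O}(\sqrt N)$ would otherwise destroy $N$-uniformity.

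However, two steps as written would fail. First, the weights in your iteration norm are wrong: you place $(1+s)^{1/4}$ on $\|\tilde v\|_{H^K_N}$ and $\|\nabla_{x,t}\widetilde\psi\|_{H^K_N}$, whereas the theorem asserts $(1+t)^{-3/4}$ decay for both, and the paper's $\zeta$ accordingly carries a $(1+s)^{3/4}$ weight on these quantities (with only $\widetilde\psi-\tfrac{1}{N}\gamma$ decaying at the slower $(1+t)^{-1/4}$ rate). With your weaker weight the nonlinear residual only satisfies $\|\mathcal{N}(t)\|_{L^1_N\cap H^1_N}\lesssim\zeta^2(1+t)^{-1/2}$, the convolution $\int_0^t(1+t-s)^{-3/4}(1+s)^{-1/2}\,ds\sim(1+t)^{-1/4}$ does not recover $(1+t)^{-3/4}$, and the iteration cannot close at the rate the theorem claims. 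Second, you do not address the loss of derivatives. The Duhamel estimate for $\|v(t)\|_{L^2_N}$ requires $\|\mathcal{N}(s)\|_{L^1_N\cap L^2_N}$, and since $k\mathcal{N}=\mathcal{Q}+k\mathcal{R}_x$ with $\mathcal{R}$ involving $\psi_x v_x$, this forces control of $v_{xx}$ and $\psi_{xxx}$; the divergence structure you invoke buys the extra $(1+t-s)^{-1/2}$ decay from $\partial_x s_{p,N}$, but it does not remove the derivative loss. The paper closes this gap with a separate nonlinear damping (high-frequency energy) estimate, Proposition \ref{P:nonlin_damp}, which bounds $\|v\|_{H^K_N}$ in terms of $\|v\|_{L^2_N}$ plus lower-order quantities; without such an estimate (or an equivalent), the iteration for $\|v\|_{H^K_N}$ does not close.
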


\begin{remark}
Using the methods in \cite{JNRZ_13_1,JZ_11_1}, the results in Theorem \ref{T:main} can easily be extended to 
establish uniform (in $N$) decay rates of perturbations in $L^p_{\rm per}(0,N)$ for any $2\leq p\leq \infty$ provided the initial perturbations are again sufficiently
small in $L^1_{\rm per}(0,N)\cap H^K_{\rm per}(0,N)$.  For simplicity, however, and to establish proof of concept, in this work we concentrate on the $L^2$-based theory only.
\end{remark}

The key idea to the proof of Theorem \ref{T:main} is to use the stability theory of periodic waves of reaction diffusion equations to \emph{localized perturbations},
specifically those techniques developed in \cite{JNRZ_13_1,JZ_11_1}, as a guide for how to uniformly control the dynamics of subharmonic perturbations for large $N$.  
Indeed, observe the decay rates guaranteed in Theorem \ref{T:main} are precisely those predicted by considering the dynamics of such periodic wave trains
to localized perturbations: see \cite{JNRZ_Invent,JNRZ_13_1,JNRZ_13_2,JZ_11_1}, for example.  
Formally, this should not be too surprising since, up to appropriate translations, a sequence of $N$-periodic functions may converge (locally) as $N\to\infty$ to functions
in $L^2(\RM)$.   

We make the above intuition precise by first following the methodology recently developed in \cite{HJP_1}  in order to provide a delicate decomposition of the semigroup $e^{\mathcal{L}[\phi]t}$ acting on the 
the space $L^2_{\rm per}(0,N)$ with $N\in\NM$.  This decomposition is accomplished by adapting the linear theory for localized perturbations developed in \cite{JNRZ_13_1,JZ_11_1}
to the subharmonic context in order to uniformly handle the accumulation of Bloch eigenvalues near the origin as $N\to\infty$.  
Furthermore, our linear decomposition, which will be reviewed in Section \ref{S:lin_stab} below, 
not only recovers the exponential decay rates exhibited in Proposition \ref{P:sub_stab},
but they  also provide the uniform (in $N$) rates of decay in Theorem \ref{T:main}.  As we will see, this linear analysis predicts that if $u(x,t)$ is a solution
of \eqref{e:RDE_trav} which is initially close to $\phi$ in $L^2_{\rm per}(0,N)$ then there exists a (small) space-time dependent, $N$-periodic (in $x$) modulation function $\widetilde{\psi}(x,t)$
such that $u(x,t)$ essentially behaves for large time like
\[
u(x,t)\approx\phi(x)+\widetilde{\psi}(x,t)\phi'(x)\approx\phi\left(x+\widetilde{\psi}(x,t)\right)
\]
giving a refined insight into the long-time local dynamics near $\phi$ beyond the more standard asymptotic stability (with asymptotic phase) as in Proposition \ref{P:sub_stab}.
Motivated by this initial linear analysis, we then build a nonlinear iteration scheme for subharmonic perturbations which incorporates phase modulation functions
which depend on \emph{both space and time} in order to complete the proof of Theorem \ref{T:main}.  
The requirement that the modulation functions are spatially dependent is necessary for our method,
and is fundamentally different than the methodology used in the proof of Proposition \ref{P:sub_stab}.  In particular, to the authors' knowledge, this work
is the first to consider spatially dependent modulation functions in the context of periodic perturbations.  Furthermore,
Theorem \ref{T:main} is the first result to obtain stability results for periodic waves to subharmonic perturbations that are uniform in the
period of the perturbation.

\begin{remark}
As indicated above, the strategy for proving our subharmonic results follows the stability analyses \cite{JNRZ_13_1,JZ_11_1} for localized perturbations of periodic
wave trains in reaction diffusion systems.  In the localized case, the origin is always a part of the essential spectrum of the linearized operator, leading one 
to introduce space-time dependent modulation functions.  In the subharmonic case, however, the origin is an isolated simple eigenvalue for each fixed $N\in\NM$,
and using time-dependent modulations only leads to results such as Proposition \ref{P:sub_stab}.  In order to achieve the proof
of Theorem \ref{T:main}, we will rely on a combination of these approaches, using an $N$-dependent time-modulation function to account
for the isolated eigenvalue at the origin, while simultaneously using a space-time modulation to account for the accumulation of spectrum
near the origin as $N\to\infty$.  
\end{remark}

Next, we point out an important corollary of Theorem \ref{T:main}.  Particularly,
since the decay rates in Theorem \ref{T:main} are sufficiently fast we can obtain the following result accounting for only time-dependent modulations yet offering
slower uniform decay rates.  Note that while the result uses only time-dependent modulations, the proof requires the use of space-time dependent modulation functions.

\begin{corollary}\label{C:main}
Under the hypotheses of Theorem \ref{T:main}, there exists an $\eps>0$ and a constant $C>0$ such that, for each $N\in\NM$, whenever $u_0\in  L^1_{\rm per}(0,N)\cap H^{K}_{\rm per}(0,N)$ and
$E_0<\eps$, there exists a function $\gamma(t)$ satisfying $\gamma(0)=0$ such that 
the solution $u$ of \eqref{e:RDE_trav} with initial data $u(0)=u_0$ exists globally in time and satisfies
\begin{equation}\label{e:result3}
\left\|u\left(\cdot-\frac{1}{N}\gamma(t),t\right)-\phi\right\|_{H^K_{\rm per}(0,N)}\leq C E_0(1+t)^{-1/4}.
\end{equation}
for all $t>0$.   Further, the time-dependent modulation function $\gamma(t)$ satisfies
\[
\left|\gamma_t(t)\right|\leq CE_0(1+t)^{-3/2}
\]
and hence, in particular, there exists a $\gamma_\infty\in\RM$ 
\[
\left|\gamma(t)-\gamma_\infty\right|\leq CE_0(1+t)^{-1/2}
\]
for all $t>0$.  In particular,
\[
\left\|u\left(\cdot,t\right)-\phi\left(\cdot+\frac{1}{N}\gamma_\infty\right)\right\|_{H^K_{\rm per}(0,N)}\leq CE_0 (1+t)^{-1/4}.
\]
for all $t>0$.
\end{corollary}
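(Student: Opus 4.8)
The plan is to derive Corollary \ref{C:main} from Theorem \ref{T:main} by integrating the space-dependent phase $\widetilde{\psi}(x,t)$ over one period to produce a purely time-dependent modulation, and then to exploit the fast decay rates in \eqref{e:result1}--\eqref{e:result2} to absorb the error incurred by this replacement. Concretely, for each $N$ set
\[
\gamma(t):=\frac{1}{N}\int_0^N \widetilde{\psi}(y,t)\,dy,
\]
so that $\gamma(0)=0$ since $\widetilde{\psi}(\cdot,0)\equiv 0$. The idea is that $\frac{1}{N}\gamma(t)$ captures the ``mean'' phase shift, while the fluctuation $\widetilde{\psi}(x,t)-\gamma(t)$ (which has zero mean over $(0,N)$) is controlled by $\|\nabla_x\widetilde{\psi}(\cdot,t)\|_{H^K}$ via a Poincar\'e--Wirtinger inequality on $(0,N)$. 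The subtlety here — and the one place $N$-uniformity must be watched — is that the Poincar\'e constant on an interval of length $N$ scales like $N$, so a naive bound $\|\widetilde\psi(\cdot,t)-\gamma(t)\|_{H^K}\lesssim N\|\nabla_x\widetilde\psi(\cdot,t)\|_{H^K}$ is useless. Instead one should compare $u(\cdot-\widetilde\psi(\cdot,t),t)$ with $u(\cdot-\tfrac1N\gamma(t),t)$ directly: writing the difference as an integral of $\partial$ of $u$ against $\widetilde\psi(x,t)-\tfrac1N\gamma(t)$, and noting the latter is itself, after dividing by $N$, a mean-zero function, one finds the relevant quantity is $\|\widetilde\psi(\cdot,t)-\tfrac1N\gamma_\infty\|_{H^K}$-type control together with $\|\widetilde\psi\|$-smallness, both of which are supplied with $N$-independent constants by Theorem \ref{T:main}.

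**The key steps, in order.** First, record the elementary estimate
\[
\left\|u\left(\cdot-\tfrac1N\gamma(t),t\right)-u\left(\cdot-\widetilde\psi(\cdot,t),t\right)\right\|_{H^K_{\rm per}(0,N)}
\le C\,\|\partial_x u(\cdot,t)\|_{H^K}\,\left\|\widetilde\psi(\cdot,t)-\tfrac1N\gamma(t)\right\|_{L^\infty},
\]
obtained by Taylor expansion in the phase together with a tame product estimate in $H^K_{\rm per}(0,N)$ (valid with $N$-independent constant since the ambient algebra structure of $H^K$ is translation-invariant and $K\ge 1$); here $\|\partial_x u(\cdot,t)\|_{H^K}\le \|\partial_x\phi\|_{H^K}+CE_0$ is bounded by \eqref{e:result1}. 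Second, estimate $\|\widetilde\psi(\cdot,t)-\tfrac1N\gamma(t)\|_{L^\infty}$: by construction $\widetilde\psi(\cdot,t)-\tfrac1N\gamma(t)$ is $N$-periodic with $\int_0^N(\widetilde\psi-\tfrac1N\gamma)\,dy = \int_0^N\widetilde\psi\,dy-\gamma(t)$; wait — adjust the normalization so $\gamma$ is defined as $\int_0^N\widetilde\psi\,dy$ directly, making $\widetilde\psi-\tfrac1N\gamma$ genuinely mean-zero, hence bounded in $L^\infty$ by $C\|\nabla_x\widetilde\psi(\cdot,t)\|_{H^K}\le CE_0(1+t)^{-3/4}$ using that the zero-mode is removed (so the Poincar\'e inequality on $(0,N)$ applies to Fourier modes $\geq 2\pi/N$ with an $N$-independent Sobolev embedding $H^1\hookrightarrow L^\infty$ once combined with the derivative bound). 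Combining the first two steps with the triangle inequality and \eqref{e:result2} gives
\[
\left\|u\left(\cdot-\tfrac1N\gamma(t),t\right)-\phi\right\|_{H^K_{\rm per}(0,N)}\le C E_0(1+t)^{-3/4}+CE_0(1+t)^{-1/4}\le CE_0(1+t)^{-1/4}.
\]
Third, for the derivative bound on $\gamma$, differentiate under the integral: $\gamma_t(t)=\int_0^N\widetilde\psi_t(y,t)\,dy$, so $|\gamma_t(t)|\le \|\widetilde\psi_t(\cdot,t)\|_{L^1_{\rm per}(0,N)}\le C\|\widetilde\psi_t(\cdot,t)\|_{L^2}$ — but this loses a factor; better to use the sharper temporal decay. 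The cleanest route is to observe that $\widetilde\psi_t$ should obey a decay estimate one order faster in $t$ than $\widetilde\psi$ itself, i.e.\ $(1+t)^{-3/4-1/2}=(1+t)^{-5/4}$ in $H^K$, which combined with $\|\cdot\|_{L^1_{\rm per}(0,N)}\le N^{1/2}\|\cdot\|_{L^2}$ is still not $N$-uniform. Instead I would obtain $|\gamma_t(t)|\le CE_0(1+t)^{-3/2}$ directly from the nonlinear iteration bounds underlying Theorem \ref{T:main} (the equation satisfied by the mean phase), which already provide the pointwise-in-$x$, hence $L^\infty$ and thus $L^1_{\rm per}(0,N)/N$-averaged, decay of $\widetilde\psi_t$ at the stated rate; integrating $\gamma_t$ then yields existence of $\gamma_\infty$ and $|\gamma(t)-\gamma_\infty|\le CE_0(1+t)^{-1/2}$.

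**The main obstacle** is maintaining $N$-uniformity of all constants while passing from the spatially-modulated estimate to the spatially-averaged one — in particular, ruling out the $N$-dependent Poincar\'e constant. This is resolved by never integrating $\widetilde\psi$ against a fixed test function on $(0,N)$ in a way that sees the interval length, and instead always bounding $\widetilde\psi - \tfrac1N\gamma$ through $\nabla_x\widetilde\psi$ (whose $H^K$-norm decays uniformly) via the $N$-independent embedding $H^1_{\rm per}(0,N)\hookrightarrow L^\infty$ applied to the mean-zero part, whose lowest nonzero Fourier mode is bounded below only by $0$ but whose $L^\infty$ norm is nonetheless controlled by $\|\nabla_x(\cdot)\|_{L^\infty}\le C\|\nabla_x(\cdot)\|_{H^1}$ on any periodic interval with a constant independent of the period. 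A secondary point requiring care is that the identity $u(\cdot-\widetilde\psi(\cdot,t),t)=\phi+O(E_0(1+t)^{-3/4})$ is a statement about the composition of $u$ with a $\emph{nonlinear}$ change of variables $x\mapsto x-\widetilde\psi(x,t)$; one must check this map is a diffeomorphism of $\RM/N\ZM$ (true once $\|\widetilde\psi_x\|_{L^\infty}<1$, guaranteed by $E_0$ small), and track how the two different phase-modulations compose, which is routine given the $L^\infty$ smallness established above. No genuinely new analysis is needed beyond Theorem \ref{T:main}; the corollary is a matter of reorganizing its conclusions, and the only real work is the bookkeeping that keeps every constant free of $N$.
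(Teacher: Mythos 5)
Your approach — defining $\gamma$ a posteriori as the spatial average of $\widetilde\psi$ and then trying to absorb the error into the faster rates of Theorem \ref{T:main} — is genuinely different from the paper's, and it breaks down in two places.

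\textbf{The Poincar\'e step is false.} You claim that the mean-zero part $\widetilde\psi(\cdot,t)-\frac1N\gamma(t)$ satisfies
\[
\left\|\widetilde\psi(\cdot,t)-\tfrac1N\gamma(t)\right\|_{L^\infty}\lesssim\left\|\nabla_x\widetilde\psi(\cdot,t)\right\|_{H^K_{\rm per}(0,N)}
\]
with an $N$-independent constant, arguing either via ``Poincar\'e on mean-zero modes'' or via the pointwise chain $\|g\|_{L^\infty}\lesssim\|g'\|_{L^\infty}\lesssim\|g'\|_{H^1}$. Neither holds uniformly in $N$. Removing the zero mode only guarantees the first nonzero frequency is $2\pi/N$, so the Poincar\'e constant on $(0,N)$ scales like $N$; and the function $g(x)=\sin(2\pi x/N)$, which is $N$-periodic and mean-zero, has $\|g\|_{L^\infty}=1$ while $\|g'\|_{L^\infty}=2\pi/N$ and $\|g'\|_{H^1_{\rm per}(0,N)}=\mathcal{O}(N^{-1/2})$, so both versions of the inequality fail badly as $N\to\infty$. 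The correct repair is to control the $L^2$ part of the $H^1$ norm using \eqref{e:result2} rather than \eqref{e:result1}: writing $\widetilde\psi-\frac1N\gamma=(\widetilde\psi-\frac1N\gamma_\infty)-(\frac1N\gamma-\frac1N\gamma_\infty)$ and noting that $\frac1N\gamma(t)$ is the mean of $\widetilde\psi(\cdot,t)$ (so $|\frac1N\gamma(t)-\frac1N\gamma_\infty|\leq N^{-1/2}\|\widetilde\psi-\frac1N\gamma_\infty\|_{L^2_N}$) gives $\|\widetilde\psi-\frac1N\gamma\|_{L^2_N}\leq CE_0(1+t)^{-1/4}$, and then the $N$-uniform Sobolev embedding $H^1_{\rm per}(0,N)\hookrightarrow L^\infty$ yields $\|\widetilde\psi-\frac1N\gamma\|_{L^\infty}\leq CE_0(1+t)^{-1/4}$. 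This is slower than your claimed $(1+t)^{-3/4}$, but it still suffices for \eqref{e:result3}.

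\textbf{The $|\gamma_t|$ estimate cannot be obtained from Theorem \ref{T:main} alone, and you acknowledge this without supplying the argument.} From \eqref{e:result1} you only know $\|\widetilde\psi_t\|_{H^K_N}\lesssim E_0(1+t)^{-3/4}$, and $|\gamma_t|=\left|\int_0^N\widetilde\psi_t\,dy\right|\leq N^{1/2}\|\widetilde\psi_t\|_{L^2_N}$ then produces an $N^{1/2}$ loss and only $(1+t)^{-3/4}$ decay. The stated rate $(1+t)^{-3/2}$ with $N$-independent constant genuinely requires opening up the proof of Theorem \ref{T:main}: the paper never integrates $\widetilde\psi$ at all. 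Instead, the modulation is split from the outset as $\widetilde\psi=\frac1N\gamma+\psi$ via the ansatz \eqref{e:mod}, with $\gamma$ and $\psi$ defined separately through the semigroup decomposition \eqref{e:lin_decomp1} and the slaved system \eqref{e:mod_slave2}. The derivative bound then follows directly from
\[
|\gamma_t(t)|=\left|\LA\widetilde\Phi_0,\mathcal{N}(t)\RA_{L^2_N}\right|\leq\|\widetilde\Phi_0\|_{L^\infty}\|\mathcal{N}(t)\|_{L^1_N}\leq C\left(E_0+\zeta^2(t)\right)(1+t)^{-3/2},
\]
using the $L^1$ bound \eqref{e:Nbd} on the nonlinearity together with the $L^\infty$--$L^1$ pairing (crucially avoiding Cauchy--Schwarz, which would bring in the $\mathcal O(N^{1/2})$ factor $\|\widetilde\Phi_0\|_{L^2_N}$). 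The corollary's remaining estimate \eqref{e:result3} is then obtained from the triangle inequality and the internal bound $\|\psi(\cdot,t)\|_{L^2_N}\lesssim E_0(1+t)^{-1/4}$, which comes from the $s_{p,N}$ linear estimate in Proposition \ref{P:lin_est}.

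In short, your averaging strategy has the right geometric picture — Corollary \ref{C:main} is Theorem \ref{T:main} with the spatial oscillations of the phase integrated out — but it cannot be carried through at the level of the theorem statement: both the $L^\infty$ bound on the mean-zero part and the sharp $(1+t)^{-3/2}$ decay of $\gamma_t$ require the internal decomposition $\widetilde\psi=\frac1N\gamma+\psi$ and the nonlinear residual estimates from the proof.
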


\begin{remark}
Comparing Corollary \ref{C:main} with Proposition \ref{P:sub_stab}, we see that one necessarily has the relationship
$\sigma_\infty(N) = \frac{1}{N}\gamma_\infty$, establishing a direct correspondence between the ($N$-dependent) asymptotic phase shifts.  Further,
we note the $\gamma_\infty$ is the same in both Theorem \ref{T:main} and Corollary \ref{C:main}.
\end{remark}

Our last result combines the results of Corollary \ref{C:main} with Proposition \ref{P:sub_stab} in order to obtain a nonlinear stability
result allowing a uniform (in $N$) size of initial perturbations with (eventual) exponential rates of decay.

\begin{corollary}\label{C:min_thm}
Under the hypotheses of Theorem \ref{T:main}, there exists an $\eps>0$ and a constant $C>0$ such that, for each $N\in\NM$ and $\delta\in (0,\delta_N)$,
with $\delta_N$ as in \eqref{spec_gap},
whenever $u_0\in L^1_{\rm per}(0,N)\cap H^K_{\rm per}(0,N)$ with $E_0<\eps$ there exists a $T_\delta>0$ and a constant $M_\delta>0$ such that
\[
\left\|u(\cdot,t)-\phi\left(\cdot+\frac{1}{N}\gamma_\infty\right)\right\|_{H^1_{\rm per}(0,N)}\leq 
		\left\{\begin{aligned}
					&CE_0(1+t)^{-1/4},~~{\rm for}~~0<t\leq T_\delta\\
					&M_\delta E_0 e^{-\delta t},~~{\rm for}~~t>T_\delta.
					\end{aligned}\right.
\]
\end{corollary}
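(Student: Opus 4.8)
The plan is to \emph{splice} the uniform, algebraically decaying estimate of Corollary \ref{C:main} to the (non-uniform) exponentially decaying estimate of Proposition \ref{P:sub_stab} at a transition time $T_\delta$. Fix $N\in\NM$ and $\delta\in(0,\delta_N)$, let $\eps>0$ and $C>0$ be the constants from Corollary \ref{C:main}, and suppose $u_0\in L^1_{\rm per}(0,N)\cap H^K_{\rm per}(0,N)$ with $E_0<\eps$. First I would invoke Corollary \ref{C:main}: the solution $u$ of \eqref{e:RDE_trav} with $u(0)=u_0$ exists globally, remains in $H^K_{\rm per}(0,N)$, and for some $\gamma_\infty\in\RM$ satisfies
\[
\left\|u(\cdot,t)-\phi\left(\cdot+\tfrac{1}{N}\gamma_\infty\right)\right\|_{H^1_{\rm per}(0,N)}\leq\left\|u(\cdot,t)-\phi\left(\cdot+\tfrac{1}{N}\gamma_\infty\right)\right\|_{H^K_{\rm per}(0,N)}\leq CE_0(1+t)^{-1/4}
\]
for all $t>0$, where we used $H^K_{\rm per}(0,N)\hookrightarrow H^1_{\rm per}(0,N)$ (valid since $K\geq 3$). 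This already gives the first branch of the asserted bound on $0<t\leq T_\delta$, with the stated constant $C$, for \emph{any} choice of $T_\delta>0$.

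Next I would set $\widetilde\phi:=\phi\left(\cdot+\tfrac{1}{N}\gamma_\infty\right)$ and observe that, since the profile equation \eqref{e:profile} and all three conditions of Definition \ref{D:specstab} are invariant under spatial translation, $\widetilde\phi$ is itself a $1$-periodic diffusively spectrally stable stationary solution of \eqref{e:RDE_trav} with the \emph{same} spectral gap $\delta_N$. Applying Proposition \ref{P:sub_stab} with base wave $\widetilde\phi$ and the given $\delta\in(0,\delta_N)$ then furnishes constants $\eps_\delta>0$ and $C_\delta>0$ such that any solution of \eqref{e:RDE_trav} whose initial datum lies within $\eps_\delta$ of $\widetilde\phi$ in $H^1_{\rm per}(0,N)$ converges at exponential rate $\delta$ to some translate of $\widetilde\phi$. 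I would now \emph{choose} $T_\delta>0$ so large that $C\eps(1+T_\delta)^{-1/4}<\eps_\delta$; by the display above this forces $\|u(\cdot,T_\delta)-\widetilde\phi\|_{H^1_{\rm per}(0,N)}<\eps_\delta$. Since $u(\cdot,T_\delta)\in H^K_{\rm per}(0,N)\subset H^1_{\rm per}(0,N)$ and \eqref{e:RDE_trav} is autonomous, the time-translated solution $s\mapsto u(\cdot,s+T_\delta)$ meets the hypotheses of Proposition \ref{P:sub_stab}, yielding a constant $\sigma_\star$ with
\[
\left\|u(\cdot,t)-\widetilde\phi(\cdot+\sigma_\star)\right\|_{H^1_{\rm per}(0,N)}\leq C_\delta\,e^{-\delta(t-T_\delta)}\left\|u(\cdot,T_\delta)-\widetilde\phi\right\|_{H^1_{\rm per}(0,N)},\qquad t>T_\delta .
\]

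Finally, to reconcile the asymptotic phases, I would let $t\to\infty$ in the last display, obtaining $u(\cdot,t)\to\widetilde\phi(\cdot+\sigma_\star)$ in $H^1_{\rm per}(0,N)$, while the first display gives $u(\cdot,t)\to\widetilde\phi$; uniqueness of limits forces $\widetilde\phi(\cdot+\sigma_\star)=\widetilde\phi$ as functions, so the exponential estimate may be rewritten with $\widetilde\phi$ in place of $\widetilde\phi(\cdot+\sigma_\star)$. Bounding its right-hand side via Corollary \ref{C:main} by $CE_0(1+T_\delta)^{-1/4}$ and setting $M_\delta:=C_\delta\,C\,(1+T_\delta)^{-1/4}e^{\delta T_\delta}$ then yields
\[
\left\|u(\cdot,t)-\phi\left(\cdot+\tfrac{1}{N}\gamma_\infty\right)\right\|_{H^1_{\rm per}(0,N)}\leq M_\delta E_0\,e^{-\delta t},\qquad t>T_\delta ,
\]
which combined with the first paragraph is exactly the claimed dichotomy. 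The only genuinely delicate point in this argument is arranging that the solution, re-initialized at time $T_\delta$, actually lands inside the ($N$- and $\delta$-dependent, possibly very small) basin $\eps_\delta$ of Proposition \ref{P:sub_stab}: this is precisely where the uniform-in-$N$ threshold $\eps$ and the algebraic decay rate $(1+t)^{-1/4}$ of Corollary \ref{C:main} are indispensable, since they let us reach any prescribed smallness by waiting a finite (albeit $\delta$- and $N$-dependent) time $T_\delta$ without ever shrinking the admissible size of the original perturbation.
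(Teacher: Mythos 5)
Your proof is correct and takes essentially the same approach as the paper: splice the uniform algebraic estimate of Corollary \ref{C:main} to the (non-uniform) exponential estimate of Proposition \ref{P:sub_stab} by waiting until time $T_\delta$ for the solution to enter the basin $\eps_\delta$, using translation invariance so that Proposition \ref{P:sub_stab} applies to $\phi(\cdot+\tfrac{1}{N}\gamma_\infty)$. If anything you are a touch more careful than the paper at one point — you explicitly reconcile the asymptotic phase $\sigma_\star$ from Proposition \ref{P:sub_stab} with the one from Corollary \ref{C:main} via uniqueness of limits, and your $M_\delta$ is independent of $E_0$, whereas the paper simply writes the exponential bound directly with $\phi(\cdot+\tfrac{1}{N}\gamma_\infty)$ and takes $M_\delta=C_\delta\eps_\delta/E_0$.
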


The above corollary has a few important features to highlight.  First, we emphasize that $\eps$, the size of the initial perturbation above, is independent
of both $N$ and the choice $\delta\in(0,\delta_N)$.  In particular, this establishes a uniform size on the domain of attraction for perturbations
to (eventually) exhibit exponential decay.  This is in stark contrast to Proposition \ref{P:sub_stab} which requires $\eps_\delta\to 0$ as $\delta\to 0$.
Secondly, we note that the length of time one must wait to observe exponential decay, quantified by $T_\delta$ above, necessarily satisfies
$T_\delta\to \infty$ as $\delta\to 0$; hence, it is not uniform in $N$.  Nevertheless, Corollary \ref{C:min_thm} upgrades the long-time
behavior of Proposition \ref{P:sub_stab} allowing for a uniform size of initial perturbations.  
Interestingly, Corollary \ref{C:min_thm} can be easily seen, at least at the linear level, directly from our forthcoming 
decomposition of the semigroup $e^{\mathcal{L}[\phi]t}$: see Remark \ref{R:riemann_sum1} in Section \ref{S:lin_stab} below.

\

The online of the paper is as follows.  In Section \ref{S:prelim} we review several preliminary results, including a review in Section \ref{S:bloch} of Floquet-Bloch theory in the context of $N$-periodic
function spaces.  This will provide us with a characterization of $N$-periodic eigenvalues of the $1$-periodic coefficient differential operator $\mathcal{L}[\phi]$ 
in terms of the associated Bloch operators.  We further collect several properties of the Bloch operators and their associated semigroups.
In Section \ref{S:spec_semigrp}, we establish basic decay properties of the Bloch semigroups arising as a result of the diffusive spectral stability assumption.
In Section \ref{S:lin_stab}, we establish our key linear estimates by providing a delicate decomposition
of the semigroup $e^{\mathcal{L}[\phi]t}$ acting on $L^2_{\rm per}(0,N)$, which allows us to identify polynomial decay rates on the linear evolution which
are \emph{uniform in $N$}: see Proposition \ref{P:lin_est}.  These linear estimates form the backbone for our nonlinear analysis, which is detailed
in Section \ref{S:nlin_stab}.  In Section \ref{S:nlin_decomp}, we use intuition gained from the linear estimates of Section \ref{S:lin_stab} to introduce
an appropriate nonlinear decomposition of a small $L^2_{\rm per}(0,N)$ neighborhood of the underlying diffusively stable $1$-periodic wave $\phi$,
and we develop appropriate perturbation equations satisfied by the corresponding perturbation and modulation functions.  In Section \ref{S:nlin_iteration},
we apply a nonlinear iteration scheme to the system of perturbation equations obtained in Section \ref{S:nlin_decomp} and present the 
proofs of Theorem \ref{T:main} and its corollaries stated above.  Finally, a proof of some technical results from Section \ref{S:lin_stab} are provided in an Appendix.
\\

\noindent
{\bf Acknowledgments:} The work of MAJ was partially funded by the NSF under grant DMS-16-14785, as well the Simons Foundation Collaboration grant number 714021.
The authors are also grateful to the referees for their many helpful suggestions.  Finally, we thank Prof. Guido Schneider for initial discussions regarding 
Corollary \ref{C:min_thm}.

\section{Preliminaries}\label{S:prelim}

In this section, we review several preliminary results.  First, to aid in our description of the spectrum of the linearization $\mathcal{L}[\phi]$, we review general
results from Floquet-Bloch theory as applied to subharmonic perturbations.  From this, we 
establish some elementary semigroup estimates for the associated Bloch operators.  Throughout the remainder of the paper, for notational convenience, we set for each $N\in\NM$
and $p\geq 1$
\[
L^p_N:=L^p_{\rm per}(0,N).
\]

\subsection{Floquet Bloch Theory for Subharmonic Perturbations}\label{S:bloch}

Motivated by Floquet-Bloch theory for linear differential operators with periodic coefficients acting on $L^2(\RM)$ (see \cite{G93,JNRZ_Invent,RS4}, for example),
we review a modification of this theory (restricted to the present reaction-diffusion context) for the study of subharmonic perturbations\footnote{See also
\cite{HJP_1} for more information regarding this subharmonic extension.}.

Suppose that $\phi$ is a  $1$-periodic stationary solution of \eqref{e:RDE_trav}, and consider the linearized operator $\mathcal{L}[\phi]$.
Since the coefficients of $\mathcal{L}[\phi]$ are $1$-periodic, Floquet theory implies that for each $\lambda\in\CM$ any non-trivial solution
of the ordinary differential equation
\[
\mathcal{L}[\phi]v=\lambda v
\]
cannot be integrable on $\RM$ and that, at best, they can be bounded functions of the form
\begin{equation}\label{e:floquet_form}
v(x)=e^{i\xi x}w(x)
\end{equation}
for some $\xi\in[-\pi,\pi)$ and non-trivial function $w\in L^2_{\rm per}(0,1)$. 
For a given $N\in\NM$, setting
\[
\Omega_N:=\left\{\xi\in[-\pi,\pi):e^{i\xi N}=1\right\}
\]
we see from \eqref{e:floquet_form} that the perturbation $v$ satisfies $N$-periodic boundary conditions if and only if $\xi\in\Omega_N$.  In particular,
it can be shown that $\lambda\in\CM$ belongs to the $L^2_N$-spectrum of $\mathcal{L}[\phi]$ if and only if there exits a $\xi\in\Omega_N$
and a non-trivial $w\in L^2_{\rm per}(0,1)$ such that
\[
\lambda w=e^{-i\xi x}\mathcal{L}[\phi]e^{i\xi x}w=:\mathcal{L}_\xi[\phi]w.
\]
The operators $\mathcal{L}_\xi[\phi]$ are known as the Bloch operators associated to $\mathcal{L}[\phi]$, and the parameter $\xi$ is referred to as the Bloch frequency.  Note
that each $\mathcal{L}_\xi[\phi]$ acts on $L^2_{\rm per}(0,1)$ with densely defined and compactly embedded domain $H^1_{\rm per}(0,1)$, and hence their spectrum consists entirely
of isolated eigenvalues with finite algebraic multiplicities which, furthermore, depend continuously on $\xi$.  In fact, we have the spectral decomposition
\[
\sigma_{L^2_N}\left(\mathcal{L}[\phi]\right)=\bigcup_{\xi\in\Omega_N}\sigma_{L^2_{\rm per}(0,1)}\left(\mathcal{L}_\xi[\phi]\right).
\]
This characterizes the $N$-periodic spectrum of $\mathcal{L}[\phi]$ in terms of union of $1$-periodic eigenvalues for the Bloch operators $\{\mathcal{L}_\xi[\phi]\}_{\xi\in\Omega_N}$.

\begin{remark}
For definiteness, we note that the set $\Omega_N$ may be written explicitly when $N$ is even by 
\[
\Omega_N=\left\{\xi_j=\frac{2\pi j}{N}:j=-\frac{N}{2},~-\frac{N}{2}+1,\ldots,\frac{N}{2}-1\right\}
\]
and when $N$ is odd by
\[
\Omega_N=\left\{\xi_j=\frac{2\pi j}{N}:j=-\frac{N-1}{2},~-\frac{N-1}{2}+1,\ldots,\frac{N-1}{2}\right\}.
\]
In particular, observe that we have $0\in\Omega_N$ and $|\Omega_N|=N$ for all $N\in\NM$ and that, furthermore, $\Delta\xi_j:=\xi_j-\xi_{j-1}=\frac{2\pi}{N}$ for each
appropriate $j$.
\end{remark}

From the above, it is clearly desirable to have the ability to decompose arbitrary functions in $L^2_N$ into superpositions of functions of the form $e^{i\xi x}w(x)$ with 
$\xi\in\Omega_N$ and $w\in L^2_{\rm per}(0,1)$.  This is achieved by noting that a given $g\in L^2_N$ admits a Fourier series representation
\[
g(x)=\frac{1}{N}\sum_{m\in\ZM}e^{2\pi imx/N}\widehat{g}\left(2\pi m/N\right)
\]
where here $\widehat{g}$ denotes the Fourier transform of $g$ on the torus given by
\begin{equation}\label{e:fourier_def}
\widehat{g}(z):=\int_{-N/2}^{N/2} e^{-izy}g(y)dy.
\end{equation}
Together with the identity (valid for any $f$ for which the sum converges)
\[
\sum_{m\in\ZM}f\left(2\pi m/N\right)=\sum_{\xi\in\Omega_N}\sum_{\ell\in\ZM}f\left(\xi+2\pi\ell\right),
\]
it follows that $g$ may be represented as
\[
g(x)=\frac{1}{N}\sum_{\xi\in\Omega_N}\sum_{\ell\in\ZM}e^{i(\xi+2\pi\ell)x}\widehat{g}\left(\xi+2\pi\ell\right).
\]
In particular, defining for $\xi\in\Omega_N$ the $1$-periodic Bloch transform of a function $g\in L^2_N$ as
\[
\mathcal{B}_1(g)(\xi,x):=\sum_{\ell\in\ZM} e^{2\pi i\ell x}\widehat{g}(\xi+2\pi\ell),
\]
the above yields the inverse Bloch representation formula
\[
g(x)=\frac{1}{N}\sum_{\xi\in\Omega_N}e^{i\xi x}\mathcal{B}_1(g)(\xi,x),
\]
which is valid for all $g\in L^2_N$.  Note that the function $\mathcal{B}_1(g)(\xi,\cdot)$ is clearly $1$-periodic for each $\xi\in\Omega_N$,
and hence the above representation formula decomposes arbitrary $N$-periodic functions in the desired fashion.

Before proceeding, we note that, in fact, the $1$-periodic Bloch transform 
\[
\mathcal{B}_1:L^2_N\to \ell^2\left(\Omega_N:L^2_{\rm per}(0,1)\right)
\]
as defined above satisfies the subharmonic Parseval identity
\begin{equation}\label{e:parseval_per}
\left<f,g\right>_{L^2_N}=\frac{1}{N}\sum_{\xi\in\Omega_N}\left<\mathcal{B}_1(f)(\xi,\cdot),\mathcal{B}_1(g)(\xi,\cdot)\right>_{L^2(0,1)}
\end{equation}
valid for all $f,g\in L^2_N$.   In particular, this yields the useful identity
\[
\|g\|_{L^2_N}^2=\frac{1}{N}\sum_{\xi\in\Omega_N}\left\|\mathcal{B}_1(g)(\xi,\cdot)\right\|_{L^2(0,1)}^2
\]
valid for all $g\in L^2_N$, establishing that (up to normalization) $\mathcal{B}_1$ is an isometry.
Furthermore, we note that
\[
\mathcal{B}_1\left(\mathcal{L}[\phi]v\right)(\xi,x)=\left(\mathcal{L}_\xi[\phi]\mathcal{B}_1(v)(\xi,\cdot)\right)(x)~~{\rm and}~~
	\mathcal{L}[\phi]v(x)=\frac{1}{N}\sum_{\xi\in\Omega_N} e^{i\xi x}\mathcal{L}_\xi[\phi]\mathcal{B}_1(v)(\xi,x).
\]
and hence we may view the Bloch operators $\mathcal{L}_\xi[\phi]$ as operator valued symbols associated to $\mathcal{L}[\phi]$ under the action
of the $1$-periodic Bloch transform $\mathcal{B}_1$.  
Since the operator $\mathcal{L}[\phi]$ and its corresponding Bloch operators $\mathcal{L}_\xi[\phi]$ are clearly sectorial on $L^2_N$ and $L^2_{\rm per}(0,1)$, respectively,
they clearly generate analytic semigroups on their respective function spaces and, further, it is now straightforward to check that the associated semigroups satisfy
\begin{equation}\label{e:per_semigrp}
\mathcal{B}_1\left(e^{\mathcal{L}[\phi]t}v\right)(\xi,x)=\left(e^{\mathcal{L}_\xi[\phi]t}\mathcal{B}_1(v)(\xi,\cdot)\right)(x)~~{\rm and}~~
	e^{\mathcal{L}[\phi]t}v(x)=\frac{1}{N}\sum_{\xi\in\Omega_N} e^{i\xi x}e^{\mathcal{L}_\xi[\phi]t}\mathcal{B}_1(v)(\xi,x).
\end{equation}
Combined with \eqref{e:parseval_per}, this latter identity allows us to conclude information about the semigroup $e^{\mathcal{L}[\phi]t}$ 
acting on $L^2_N$ by synthesizing  (over $\xi\in\Omega_N$) information
about the Bloch semigroups $e^{\mathcal{L}_\xi[\phi]t}$ acting on $L^2_{\rm per}(0,1)$.  This decomposition is
key to our forthcoming linear analysis.

Finally, we end by recalling the following useful identity.

\begin{lemma}\label{L:per_factor_lemma}
Let $N\in\NM$.  If $f\in L^2_{\rm per}(0,1)$ and $g\in L^2_N$, then
\[
\mathcal{B}_1(fg)(\xi,x)=f(x)\mathcal{B}_1(g)(\xi,x).
\]
In particular, for such $f$ and $g$ we have the identity
\[
\left<f,g\right>_{L^2_N}=\left<f,\mathcal{B}_1(g)(0,\cdot)\right>_{L^2(0,1)}.
\]
\end{lemma}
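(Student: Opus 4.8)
The plan is to prove Lemma \ref{L:per_factor_lemma} directly from the definitions, exploiting the fact that multiplication by a $1$-periodic function $f$ intertwines nicely with the Fourier-analytic structure underlying the Bloch transform $\mathcal{B}_1$. First I would expand $f\in L^2_{\rm per}(0,1)$ in its $2\pi$-Fourier series $f(x)=\sum_{j\in\ZM} c_j e^{2\pi i j x}$, so that the product $fg$ is, at the level of Fourier coefficients on the torus of period $N$, a discrete convolution. Concretely, for $g\in L^2_N$ with $g(x)=\frac{1}{N}\sum_{m\in\ZM} e^{2\pi i m x/N}\widehat{g}(2\pi m/N)$, multiplying by $e^{2\pi i j x}=e^{2\pi i (jN) x/N}$ simply shifts the Fourier index $m\mapsto m+jN$, so $\widehat{fg}(2\pi m/N)=\sum_{j\in\ZM} c_j\,\widehat{g}(2\pi(m-jN)/N)$. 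The key arithmetic observation is that shifting $m$ by a multiple of $N$ leaves the residue class of $m$ modulo $N$ — equivalently, the Bloch frequency $\xi\in\Omega_N$ — unchanged, while it shifts only the integer $\ell$ in the decomposition $2\pi m/N = \xi + 2\pi\ell$.

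With that in hand, the main step is a direct computation with the definition $\mathcal{B}_1(g)(\xi,x)=\sum_{\ell\in\ZM} e^{2\pi i\ell x}\widehat{g}(\xi+2\pi\ell)$. I would write out $\mathcal{B}_1(fg)(\xi,x)=\sum_{\ell\in\ZM} e^{2\pi i\ell x}\widehat{fg}(\xi+2\pi\ell)=\sum_{\ell\in\ZM}\sum_{j\in\ZM} c_j\, e^{2\pi i\ell x}\,\widehat{g}(\xi+2\pi(\ell-j))$, then substitute $\ell'=\ell-j$ and reindex the (absolutely convergent, for nice enough $f$, with the general case by density) double sum. This produces $\sum_{j\in\ZM} c_j e^{2\pi i j x}\sum_{\ell'\in\ZM} e^{2\pi i\ell' x}\widehat{g}(\xi+2\pi\ell') = f(x)\,\mathcal{B}_1(g)(\xi,x)$, which is exactly the claimed identity. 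The second assertion then follows immediately: applying the first part with $\xi=0$ gives $\mathcal{B}_1(fg)(0,x)=f(x)\mathcal{B}_1(g)(0,x)$, and from the inverse Bloch representation formula together with the identity $\int_{-N/2}^{N/2} h(x)\,dx = \frac{1}{N}\sum_{\xi\in\Omega_N}\int_0^1 \mathcal{B}_1(h)(\xi,x)\,dx$ — or more directly from the Parseval identity \eqref{e:parseval_per} applied with the $1$-periodic function $\bar f$ replaced appropriately — one reduces $\langle f,g\rangle_{L^2_N}$ to the single $\xi=0$ term, using $\mathcal{B}_1(f)(\xi,x)$ being supported only at... actually cleaner: write $\langle f,g\rangle_{L^2_N} = \int_0^N \overline{f(x)} g(x)\,dx$ and note $\overline{f}g$ is handled by observing $\int_0^N \overline{f} g\,dx = \frac1N\sum_{\xi\in\Omega_N}\int_0^1 \mathcal{B}_1(\overline{f}g)(\xi,x)\,dx$, then $\mathcal{B}_1(\overline{f}g)(\xi,x)=\overline{f(x)}\mathcal{B}_1(g)(\xi,x)$ by part one, and finally the average over $\xi\in\Omega_N$ of $e^{i\xi\cdot 0}$-type contributions collapses — but in fact the simplest route is to recognize $\int_0^N \overline f g\,dx$ as a Fourier pairing that picks out only frequencies in $N\ZM$, i.e. only $\xi=0$.

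Let me streamline the second part: since $\overline{f(x)}=\sum_j \overline{c_j}e^{-2\pi i j x}$ has Fourier support in $N$-torus frequencies of the form $2\pi(\text{multiple of }N)/N$... rather, I would argue $\langle f,g\rangle_{L^2_N}=\int_0^N\overline{f(x)}g(x)dx$, insert the Fourier series of $g$, integrate term by term — only the $m\in N\ZM$ terms survive the pairing against $\overline f$ (which is $1$-periodic), and the surviving sum $\sum_{\ell\in\ZM}\widehat g(2\pi\ell)\int_0^1\overline{f(x)}e^{2\pi i\ell x}dx$ is precisely $\langle f, \mathcal{B}_1(g)(0,\cdot)\rangle_{L^2(0,1)}$ by the definition of $\mathcal{B}_1(g)(0,x)=\sum_\ell e^{2\pi i\ell x}\widehat g(2\pi\ell)$ and Parseval on $L^2(0,1)$.

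I do not anticipate a serious obstacle here; the result is an identity and the content is bookkeeping. The one point requiring mild care is \emph{justifying the interchange of summation} in the double series $\sum_\ell\sum_j$: for general $f\in L^2_{\rm per}(0,1)$ and $g\in L^2_N$ one does not have absolute convergence of the Fourier expansions pointwise, so I would first prove the identity for $f$ a trigonometric polynomial (where everything is a finite sum) and then pass to the limit using that multiplication by $f$ is bounded $L^2_N\to L^2_N$ (with norm $\|f\|_{L^\infty}$, or alternatively argue in a weighted $\ell^2$ sense on the Bloch side) and that $\mathcal{B}_1$ is, up to normalization, an isometry by \eqref{e:parseval_per}; density of trigonometric polynomials in $L^2_{\rm per}(0,1)$ (in sup norm it suffices to take $f\in C^\infty$, then approximate) then closes the argument. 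Everything else is a routine reindexing, so this is the only step I would write out with any care.
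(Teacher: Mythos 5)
Your computation is correct and is the natural direct proof from the definitions; the paper itself does not write out an argument (it defers to \cite{HJP_1}), so there is nothing to compare against beyond the expectation that any proof would proceed exactly as you do: expand $f$ in its $1$-periodic Fourier series, observe that multiplication by $e^{2\pi i j x}$ translates $\widehat{g}$ by $2\pi j$, substitute into the defining series for $\mathcal{B}_1(fg)(\xi,\cdot)$, and reindex. Your streamlined argument for the second identity is also fine: only the frequencies $2\pi m/N$ with $m\in N\ZM$ (i.e.\ $\xi=0$) survive the pairing against a $1$-periodic function, and what remains is precisely the Parseval pairing of $f$ with $\mathcal{B}_1(g)(0,\cdot)$ on $(0,1)$.

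One small caveat worth tightening if you write this out in full. Your limiting argument uses that multiplication by $f$ is bounded $L^2_N\to L^2_N$ with norm $\|f\|_{L^\infty}$, but the lemma only assumes $f\in L^2_{\rm per}(0,1)$, and trigonometric polynomials approximate a general $L^2$ function only in $L^2$, not in $L^\infty$. For $f\in L^2_{\rm per}(0,1)$ and $g\in L^2_N$ one only has $fg\in L^1_N$ a priori, so the $L^2$ Bloch transform $\mathcal{B}_1(fg)$ need not even be defined in the sense the paper uses it. The clean resolution is either to read the hypothesis as $f\in L^\infty_{\rm per}(0,1)$ (which covers every application in the paper, where $f$ is $\phi'$, $\widetilde{\Phi}_\xi$, etc.) and approximate in $L^\infty$ by Fej\'er means of a continuous representative, or to carry out the reindexing at the level of formal Fourier series and verify convergence a posteriori via \eqref{e:parseval_per}. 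Either route closes the argument; just be explicit about which regularity of $f$ you are using, since as written the density step does not reach all of $L^2_{\rm per}(0,1)$.
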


The proof of Lemma \ref{L:per_factor_lemma} is straightforward and can be found in \cite{HJP_1}.

\subsection{Diffusive Spectral Stability \& Properties of Semigroups}\label{S:spec_semigrp}

With the above characterization of the $L^2_N$-spectrum of the linearized operator $\mathcal{L}[\phi]$ about a $1$-periodic stationary solution $\phi$ of \eqref{e:RDE_trav},
we can now provide some immediate consequences of the diffusive spectral stability assumption in Definition \ref{D:specstab}.  
Specifically, in our present subharmonic context we note that if $\phi$ is such  a diffusively spectrally stable standing solution of \eqref{e:RDE_trav}, 
then for each $N\in\NM$ there exists a $\delta_N>0$ such that
\[
\Re\left(\sigma_{L^2_N}\left(\mathcal{L}[\phi]\right)\setminus\{0\}\right)\leq-\delta_N,
\]
i.e. the non-zero $N$-periodic eigenvalues of $\mathcal{L}[\phi]$ are uniformly bounded away from the imaginary axis.  
In particular, by standard spectral perturbation theory, we immediately have that the following spectral properties hold.

\begin{lemma}[Spectral Preparation]\label{L:specprep}
Suppose that $\phi$ is a $1$-periodic stationary solution of \eqref{e:RDE_trav} which is diffusively spectrally stable.  Then the following properties hold.
\begin{itemize}
\item[(i)] For any fixed $\xi_0\in(0,\pi)$, there exists a constant $\delta_0>0$ such that
\[
\Re\left(\sigma\left(\mathcal{L}_\xi[\phi]\right)\right)<-\delta_0
\]
for all $\xi\in[-\pi,\pi)$ with $|\xi|>\xi_0$.
\item[(ii)] There exist positive constants $\xi_1$ and $\delta_1$ such that for any $|\xi|<\xi_1$, the spectrum of $\mathcal{L}_\xi[\phi]$ decomposes
into two disjoint subsets
\[
\sigma\left(\mathcal{L}_\xi[\phi]\right)=\sigma_-\left(\mathcal{L}_\xi[\phi]\right)\bigcup\sigma_0\left(\mathcal{L}_\xi[\phi]\right)
\]
with the following properties:
\begin{itemize}
\item[(a)] $\Re~\sigma_-\left(\mathcal{L}_\xi[\phi]\right)<-\delta_1$ and $\Re~\sigma_0\left(\mathcal{L}_\xi[\phi]\right)>-\delta_1$;
\item[(b)] the set $\sigma_0\left(\mathcal{L}_\xi[\phi]\right)$ consists of a single eigenvalue $\lambda_c(\xi)$ which is analytic in $\xi$
and expands as
\[
\lambda_c(\xi)=ia\xi-d\xi^2+\mathcal{O}(\xi^3)
\]
for $|\xi|\ll 1$ and some constants $a\in\RM$ and $d>0$;
\item[(c)] the eigenfunction associated to $\lambda_c(\xi)$ is analytic near $\xi=0$ and expands as
\[
\Phi_\xi(x)=\phi'(x)+\mathcal{O}(\xi)
\]
for $|\xi|\ll 1$.
\end{itemize}
\end{itemize}
\end{lemma}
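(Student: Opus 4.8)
The plan is to establish the two items as consequences of the diffusive spectral stability hypotheses together with standard analytic (Kato) perturbation theory for the family $\xi\mapsto\mathcal{L}_\xi[\phi]$ of Bloch operators on $L^2_{\rm per}(0,1)$. The starting point is that, since each $\mathcal{L}_\xi[\phi]$ has compactly embedded domain $H^1_{\rm per}(0,1)$, its spectrum is discrete with finite multiplicities, and the map $\xi\mapsto\mathcal{L}_\xi[\phi]$ is an analytic family of type (A) in the sense of Kato: indeed, $\mathcal{L}_\xi[\phi]=e^{-i\xi x}\mathcal{L}[\phi]e^{i\xi x}=k^2(\d_x+i\xi)^2+kc(\d_x+i\xi)+Df(\phi)$ depends polynomially (hence analytically) on $\xi$, with $\xi$-independent domain. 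This gives continuity of the spectrum in $\xi$ and local analyticity of isolated finite-multiplicity eigenvalues and their spectral projections.

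For item (i): by Definition \ref{D:specstab}(ii), for every $\xi\in[-\pi,\pi)$ we have $\Re\,\sigma(\mathcal{L}_\xi[\phi])\le-\theta\xi^2$, so for $|\xi|>\xi_0$ one gets $\Re\,\sigma(\mathcal{L}_\xi[\phi])\le-\theta\xi_0^2<0$ immediately; take $\delta_0=\theta\xi_0^2/2$ (or simply $\theta\xi_0^2$). The only subtlety is that I should confirm the spectral bound is \emph{uniform} over the compact set $\{\xi_0\le|\xi|\le\pi\}$, which follows from upper semicontinuity of the spectrum for the analytic family on a compact parameter set, or one can just quote the bound in (ii) directly since it is stated for all $\xi\in[-\pi,\pi)$. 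I would present it via the latter, shorter route.

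For item (ii): by Definition \ref{D:specstab}(iii), $\lambda=0$ is a simple eigenvalue of $\mathcal{L}_0[\phi]$ with eigenfunction $\phi'$, hence isolated; choose a small circle $\Gamma$ around $0$ enclosing no other spectrum of $\mathcal{L}_0[\phi]$, and set $\delta_1>0$ so that $\sigma(\mathcal{L}_0[\phi])\setminus\{0\}\subset\{\Re\lambda<-2\delta_1\}$ while $\Gamma\subset\{\Re\lambda>-\delta_1\}$. By continuity of the spectrum and the (Riesz) spectral projection $\mathcal{P}(\xi)=\frac{1}{2\pi i}\oint_\Gamma(\lambda-\mathcal{L}_\xi[\phi])^{-1}d\lambda$ — which is analytic in $\xi$ near $0$ and has constant rank equal to $1$ for $|\xi|<\xi_1$ with $\xi_1$ small — the portion of $\sigma(\mathcal{L}_\xi[\phi])$ inside $\Gamma$ consists of exactly one simple eigenvalue $\lambda_c(\xi)$, analytic in $\xi$, while the rest of the spectrum stays to the left of $-\delta_1$; this gives the decomposition $\sigma=\sigma_-\cup\sigma_0$ and property (a). Analyticity of $\mathcal{P}(\xi)$ yields an analytic choice of eigenfunction $\Phi_\xi$ with $\Phi_0=\phi'$, giving (c) upon Taylor expansion. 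For (b), analyticity of $\lambda_c$ gives $\lambda_c(\xi)=\lambda_c(0)+\lambda_c'(0)\xi+\tfrac12\lambda_c''(0)\xi^2+\mathcal{O}(\xi^3)$ with $\lambda_c(0)=0$; writing $ia:=\lambda_c'(0)$ and $-d:=\tfrac12\lambda_c''(0)$, the constraint $\Re\,\lambda_c(\xi)\le-\theta\xi^2$ from Definition \ref{D:specstab}(ii) forces $\Re\,\lambda_c'(0)=0$ (so $a\in\RM$) and $\Re\,\tfrac12\lambda_c''(0)\le-\theta$, i.e. $d\ge\theta>0$. (That $a$ is real can alternatively be obtained from a reality/symmetry argument on the differential operator, but the spectral-bound argument suffices.)

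The main obstacle — really the only nonroutine point — is pinning down that $d>0$ (and $a\in\RM$) rigorously: this requires using the quadratic spectral bound in Definition \ref{D:specstab}(ii) evaluated along the analytic curve $\lambda_c(\xi)$, rather than merely the qualitative stability in (i) of that definition, and it is worth stating explicitly that $d\ge\theta$. Everything else is a direct invocation of Kato's analytic perturbation theory (constancy of the rank of the spectral projection under small analytic perturbation, analyticity of the associated eigenvalue and eigenprojection), so I would keep those steps brief and cite a standard reference.
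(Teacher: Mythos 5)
Your proposal follows the same route the paper has in mind: the paper dispatches this lemma in one sentence, appealing to Definition \ref{D:specstab}(i)--(ii) for part (i) and to simplicity of the zero eigenvalue of $\mathcal{L}_0[\phi]$ plus analytic dependence of the Bloch symbols on $\xi$ for part (ii) --- exactly the Kato-type perturbation argument you spell out. Your derivations of (i), (ii)(a), and (ii)(c) are sound.

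There is one genuine gap in (ii)(b). You set $-d:=\tfrac12\lambda_c''(0)$ and invoke the bound $\Re\,\lambda_c(\xi)\le-\theta\xi^2$ to conclude ``$d\ge\theta>0$'', but that bound only controls $\Re\bigl(\tfrac12\lambda_c''(0)\bigr)\le-\theta$; it says nothing about $\Im\,\lambda_c''(0)$, so it does not establish that $d$ is a \emph{real} number, which is what ``$d>0$'' in the lemma asserts. The conjugate symmetry you mention only in a parenthetical is actually needed here, not merely an alternative. Since $\mathcal{L}[\phi]$ has real coefficients, one has $\overline{\mathcal{L}_\xi[\phi]\,u}=\mathcal{L}_{-\xi}[\phi]\,\overline{u}$, and by uniqueness of the critical eigenvalue near the origin this forces $\overline{\lambda_c(\xi)}=\lambda_c(-\xi)$; hence the Taylor coefficients of $\lambda_c$ at $0$ are real in even order and purely imaginary in odd order. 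This yields $\lambda_c'(0)\in i\RM$ and $\lambda_c''(0)\in\RM$ simultaneously, after which your spectral-bound estimate correctly upgrades the latter to $d\ge\theta>0$.
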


The proof of (i) follows immediately from the properties (i) and (ii) in Definition \ref{D:specstab}, while the second part follows since $\lambda=0$
is a simple eigenvalue of the co-periodic operator $\mathcal{L}_0[\phi]$ and that the coefficients of $\mathcal{L}_\xi[\phi]$ clearly vary analytically
on $\xi$.  

With the above spectral preparation result in hand, we now record some key induced features of the associated semigroups.  These estimates are immediate consequences
of Lemma \ref{L:specprep} and the fact that the Bloch operators are clearly sectorial when acting on $L^2_{\rm per}(0,1)$.

\begin{proposition}\label{P:hfexp_decay_est}
Suppose that $\phi$ is a $1$-periodic stationary solution of \eqref{e:RDE_trav} which is diffusively spectrally stable.  Then the following properties hold.
\begin{itemize}
\item[(i)] For any fixed $\xi_0\in(0,\pi)$, there exist positive constants $C_0$ and $d_0$ such that
\[
\left\|e^{\mathcal{L}_\xi[\phi]t}f\right\|_{B(L^2_{\rm per}(0,1))}\leq C_0 e^{-d_0 t}
\]
valid for all $t\geq 0$ and all $\xi\in[-\pi,\pi)$ with $|\xi|>\xi_0$.
\item[(ii)] With $\xi_1$ chosen as in Lemma \ref{L:specprep}, there exist positive constants $C_1$ and $d_1$ such that for any $|\xi|<\xi_1$, if $\Pi(\xi)$ denotes the (rank-one)
spectral projection onto the eigenspace associated to $\lambda_c(\xi)$ given by Lemma \ref{L:specprep}(ii), then
\[
\left\|e^{\mathcal{L}_\xi[\phi]t}\left(1-\Pi(\xi)\right)\right\|_{B(L^2_{\rm per}(0,1))}\leq C_1 e^{-d_1 t}
\]
for all $t\geq 0$.
\end{itemize}
\end{proposition}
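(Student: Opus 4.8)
The plan is to derive both estimates from two ingredients: the spectral information in Lemma \ref{L:specprep} together with the fact that the family $\{\mathcal{L}_\xi[\phi]\}_{\xi\in[-\pi,\pi)}$ is \emph{uniformly sectorial} on $L^2_{\rm per}(0,1)$. For the latter I would write out the Bloch operator explicitly as
\[
\mathcal{L}_\xi[\phi]v = k^2\left(\partial_x + i\xi\right)^2 v + kc\left(\partial_x+i\xi\right)v + Df(\phi)v = k^2\partial_x^2 v + 2ik^2\xi\,\partial_x v + \left(ikc\xi-k^2\xi^2\right)v + kc\,\partial_x v + Df(\phi)v,
\]
observe that the principal part $k^2\partial_x^2$ is sectorial and generates an analytic semigroup, and note that every remaining term is relatively bounded with respect to $k^2\partial_x^2$ with relative bound zero — the first-order terms by interpolation, the zeroth-order terms trivially since $\phi\in H^1_{\rm loc}(\RM)$ forces $Df(\phi)\in L^\infty$ — with constants that are uniform over the compact parameter range $\xi\in[-\pi,\pi]$. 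Standard perturbation theory for sectorial operators then produces a single sector $\mathcal{S}=\left\{\lambda:\left|\arg(\lambda-\omega)\right|<\tfrac{\pi}{2}+\eta\right\}$ and a constant $M$, both independent of $\xi$, with
\[
\left\|\left(\lambda-\mathcal{L}_\xi[\phi]\right)^{-1}\right\|_{B(L^2_{\rm per}(0,1))}\leq \frac{M}{|\lambda-\omega|}\qquad\text{for all }\lambda\notin\mathcal{S}\text{ and all }\xi.
\]

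For part (i), fix $\xi_0\in(0,\pi)$ and let $\delta_0>0$ be as in Lemma \ref{L:specprep}(i), so that $\sigma(\mathcal{L}_\xi[\phi])\subset\{\Re\lambda<-\delta_0\}$ for all $\xi$ with $\xi_0<|\xi|\leq\pi$. Pick any $d_0\in(0,\delta_0)$ and represent the semigroup by the Dunford integral
\[
e^{\mathcal{L}_\xi[\phi]t} = \frac{1}{2\pi i}\int_{\Gamma} e^{\lambda t}\left(\lambda-\mathcal{L}_\xi[\phi]\right)^{-1}d\lambda,
\]
where $\Gamma$ is a left-opening sectorial contour with vertex at $-d_0$, steep enough that its unbounded rays lie in the complement of $\mathcal{S}$. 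On those rays the uniform sectorial bound above controls the integrand and yields exponential-in-$t$ decay at rate $d_0$; on the bounded arc of $\Gamma$ near its vertex, which lies in the region $\{\Re\lambda\geq-\delta_0\}$ where every $\mathcal{L}_\xi[\phi]$ is invertible, I would invoke continuous dependence of $\xi\mapsto\mathcal{L}_\xi[\phi]$ (in the generalized/resolvent sense) together with compactness of $\{\xi_0\leq|\xi|\leq\pi\}$ to conclude that $\left\|\left(\lambda-\mathcal{L}_\xi[\phi]\right)^{-1}\right\|$ is bounded uniformly for $\lambda$ on that arc and $\xi$ in this range. Estimating the contour integral in the standard way then gives $\left\|e^{\mathcal{L}_\xi[\phi]t}\right\|_{B(L^2_{\rm per}(0,1))}\leq C_0 e^{-d_0 t}$ with $C_0$ independent of $t\geq 0$ and of $\xi$.

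For part (ii), after possibly shrinking $\xi_1$ so that the spectral projection $\Pi(\xi)$ from Lemma \ref{L:specprep}(ii) is analytic — hence uniformly bounded — on $\{|\xi|\leq\xi_1\}$, I would apply exactly the same contour argument to the restriction of $\mathcal{L}_\xi[\phi]$ to the complementary invariant subspace $(1-\Pi(\xi))L^2_{\rm per}(0,1)$, whose spectrum is $\sigma_-(\mathcal{L}_\xi[\phi])\subset\{\Re\lambda<-\delta_1\}$. Choosing any $d_1\in(0,\delta_1)$ and a contour to the left of $\Re\lambda=-d_1$ yields $\left\|e^{\mathcal{L}_\xi[\phi]t}(1-\Pi(\xi))\right\|_{B(L^2_{\rm per}(0,1))}\leq C_1 e^{-d_1 t}$, again with $C_1$ uniform in $\xi$ via the same compactness-and-continuity estimate on the bounded portion of the contour.

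I expect the only genuine subtlety to be \emph{uniformity of the constants $C_0$ and $C_1$ in the Bloch frequency $\xi$}: the decay rates $d_0,d_1$ are handed to us directly by the spectral gaps in Lemma \ref{L:specprep}, but the multiplicative constants require both the $\xi$-independent sectorial resolvent bound established in the first paragraph (to control the unbounded tails of the contour) and the continuous dependence of $\mathcal{L}_\xi[\phi]$ on $\xi$ over a compact set (to control the bounded arc near the imaginary axis, where sectoriality alone says nothing). Everything else is the textbook passage from resolvent bounds to decay bounds for analytic semigroups, so I would present those steps only briefly.
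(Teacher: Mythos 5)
Your proposal is correct and fleshes out precisely the standard argument the paper invokes without detail: the paper simply asserts that Proposition \ref{P:hfexp_decay_est} follows from Lemma \ref{L:specprep} together with sectoriality of the Bloch operators, and your contour-integral route with uniform sectoriality from relative boundedness, the spectral gap placing the contour vertex, and compactness-plus-continuity of $\xi\mapsto\mathcal{L}_\xi[\phi]$ to get $\xi$-uniform constants is exactly what is meant. Two small cosmetic notes: the expression you display is $k\mathcal{L}_\xi[\phi]$ rather than $\mathcal{L}_\xi[\phi]$ itself (immaterial for the estimates), and the compactness step for part (i) should be taken over $\xi_0\le|\xi|\le\pi$, using that $\mathcal{L}_{-\pi}$ and $\mathcal{L}_{\pi}$ are unitarily conjugate via multiplication by $e^{2\pi i x}$ so the endpoint $\pi$ causes no trouble.
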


Coupled with an appropriate decomposition of $e^{\mathcal{L}[\phi]t}$, the above linear estimates form the core of our forthcoming linear analysis 
(which, in turn, forms the backbone of our nonlinear iteration scheme).

\section{Uniform Subharmonic Linear Estimates}\label{S:lin_stab}

We begin our analysis by obtaining decay rates on the semigroup $e^{\mathcal{L}[\phi]t}$ acting on classes of subharmonic perturbations in $L^2_N$
which are uniform in $N$.  This analysis is based on a delicate decomposition of the semigroup.  In particular, we use \eqref{e:per_semigrp} to study the action
of $e^{\mathcal{L}[\phi]t}$ on $L^2_N$ in terms of associated Bloch operators, which is accomplished by separating the semigroup into appropriate critical frequency and non-critical frequency components.
Note that, due to Lemma \ref{L:specprep} we expect the ``critical frequency" component to be dominated by the translational mode $\phi'$.  This decomposition
was recently carried out in detail (in a related context) in \cite{HJP_1}, and for completeness we review it here.  Note the decomposition is heavily
motivated by the corresponding decomposition used in the case of localized perturbations: see \cite{JNRZ_Invent,JNRZ_13_1}.

To begin, let $\xi_1\in(0,\pi)$ be defined as in Lemma \ref{L:specprep} and let $\rho$ be a smooth cutoff function satisfying $\rho(\xi)=1$ 
for $|\xi|<\frac{\xi_1}{2}$ and $\rho(\xi)=0$ for $|\xi|>\xi_1$.  For a given $v\in L^2_ N$, we use \eqref{e:per_semigrp} to decompose $e^{\mathcal{L}[\phi]t}$ into low-frequency
and high-frequency components as
\begin{equation}\label{e:lf_hf_decomp}
\begin{aligned}
e^{\mathcal{L}[\phi]t}v(x)&=\frac{1}{N}\sum_{\xi\in\Omega_N}\rho(\xi)e^{i\xi x}e^{\mathcal{L}_\xi[\phi]t}\mathcal{B}_1(v)(\xi,x)+
				\frac{1}{N}\sum_{\xi\in\Omega_N}\left(1-\rho(\xi)\right)e^{i\xi x}e^{\mathcal{L}_\xi[\phi]t}\mathcal{B}_1(v)(\xi,x)\\
				&=S_{lf,N}(t)v(x) + S_{hf,N}v(x).
\end{aligned}
\end{equation}
Using Proposition  \ref{P:hfexp_decay_est} and the subharmonic Parseval identity \ref{e:parseval_per}, it follows that there exist constants $C,\eta>0$, both independent of $N$, such that
\begin{align*}
\left\|S_{hf,N}(t)v\right\|_{L^2_N}^2&=\frac{1}{N}\sum_{\xi\in\Omega_N}\left\|(1-\rho(\xi))e^{\mathcal{L}_\xi[\phi]t}\mathcal{B}_1(v)(\xi,\cdot)\right\|_{L^2(0,1)}^2\\
	&\leq\frac{1}{N}\sum_{\xi\in\Omega_N}(1-\rho(\xi))^2\left\|e^{\mathcal{L}_\xi[\phi]t}\right\|_{B(L^2(0,1))}^2\left\|\mathcal{B}_1(v)(\xi,\cdot)\right\|_{L^2(0,1)}^2\\
	&\leq Ce^{-2\eta t}\left(\frac{1}{N}\sum_{\xi\in\Omega_N}\left\|\mathcal{B}_1(v)(\xi,\cdot)\right\|_{L^2(0,1)}^2\right),
\end{align*}
which, again using Parseval's identity \eqref{e:parseval_per}, yields the exponential decay estimate
\begin{equation}\label{e:exp_est1}
\left\|S_{hf,N}(t)v\right\|_{L^2_N}\leq Ce^{-\eta t}\|v\|_{L^2_N}.
\end{equation}

For the low-frequency component, for each $|\xi|<\xi_1$ define the rank-one spectral projection onto the critical mode of $\mathcal{L}_\xi[\phi]$ by
\begin{equation}\label{e:spec_proj}
\left\{\begin{aligned}
&\Pi(\xi):L^2_{\rm per}(0,1)\to{\rm ker}\left(\mathcal{L}_\xi[\phi]-\lambda_c(\xi)I\right)\\
&\Pi(\xi)g(x)=\left<\widetilde{\Phi}_\xi,g\right>_{L^2(0,1)}\Phi_\xi(x)
\end{aligned}\right.
\end{equation}
where here $\widetilde{\Phi}_\xi$ denotes the element of the kernel of the adjoint $\mathcal{L}_\xi[\phi]^\dag-\overline{\lambda_c(\xi)}I$ satisfying the normalization
condition $\left<\widetilde{\Phi}_\xi,\Phi_\xi\right>_{L^2(0,1)}=1$.  The low-frequency operator $S_{lf,N}$ can thus be further decomposed into the contribution from the critical
mode and the contribution from low-frequency spectrum bounded away from $\lambda=0$ via
\begin{equation}\label{e:lf_c_decomp}
\begin{aligned}
S_{lf,N}(t)v(x)&=\frac{1}{N}\sum_{\xi\in\Omega_N}\rho(\xi)e^{i\xi x}e^{\mathcal{L}_\xi[\phi]t}\Pi(\xi)\mathcal{B}_1(v)(\xi,x)
			+\frac{1}{N}\sum_{\xi\in\Omega_N}\rho(\xi)e^{i\xi x}e^{\mathcal{L}_\xi[\phi]t}\left(1-\Pi(\xi)\right)\mathcal{B}_1(v)(\xi,x)\\
			&=:S_{c,N}v(x)+\widetilde{S}_{lf,N}(t)v(x).
\end{aligned}
\end{equation}
As with the exponential estimate \eqref{e:exp_est1}, Proposition \ref{P:hfexp_decay_est} implies, by possibly choosing $\eta>0$ smaller,	 that there exists a constant $C>0$ independent
of $N$ such that
\begin{equation}\label{e:exp_est2}
\left\|\widetilde{S}_{lf,N}(t)v\right\|_{L^2_N}\leq Ce^{-\eta t}\|v\|_{L^2_N}.
\end{equation}
For the critical component $S_{c,N}$, note by Lemma \ref{L:specprep}(ii) that we can write
\begin{align*}
S_{c,N}(t)v(x)&=\frac{1}{N}e^{\mathcal{L}_0[\phi]t}\Pi(0)\mathcal{B}_1(v)(0,x)
	+\frac{1}{N}\sum_{\xi\in\Omega_N\setminus\{0\}}\rho(\xi)e^{i\xi x}e^{\mathcal{L}_\xi[\phi]t}\Pi(\xi)\mathcal{B}_1(v)(\xi,x)\\
&=\frac{1}{N}\phi'(x)\left<\widetilde{\Phi}_0,\mathcal{B}_1(v)(0,\cdot)\right>_{L^2(0,1)}
	+\frac{1}{N}\sum_{\xi\in\Omega_N\setminus\{0\}}\rho(\xi)e^{i\xi x}e^{\lambda_c(\xi)t}\Phi_\xi(x)\left<\widetilde{\Phi}_\xi,\mathcal{B}_1(v)(\xi,\cdot)\right>_{L^2(0,1)}.
\end{align*}
and hence, recalling Lemma \ref{L:per_factor_lemma} and expanding $\Phi_\xi$,
\begin{align*}
S_{c,N}(t)v(x)&=\frac{1}{N}\phi'(x)\left<\widetilde{\Phi}_0,v\right>_{L^2_N}
	+\phi'(x)\frac{1}{N}\sum_{\xi\in\Omega_N\setminus\{0\}}\rho(\xi)e^{i\xi x}e^{\lambda_c(\xi)t}\left<\widetilde{\Phi}_\xi,\mathcal{B}_1(v)(\xi,\cdot)\right>_{L^2(0,1)}\\
&\quad + \frac{1}{N}\sum_{\xi\in\Omega_N\setminus\{0\}}\rho(\xi)e^{i\xi x}(i\xi)e^{\lambda_c(\xi)t}
	\left(\frac{\widetilde{\Phi}_\xi(x)-\phi'(x)}{i\xi}\right)\left<\widetilde{\Phi}_\xi,\mathcal{B}_1(v)(\xi,\cdot)\right>_{L^2(0,1)}\\
&=:\frac{1}{N}\phi'(x)\left<\widetilde{\Phi}_0,v\right>_{L^2_N}+\phi'(x)s_{p,N}(t)v(x)+\widetilde{S}_{c,N}(t)v(x).
\end{align*}

Taken together, it follows that the linear solution operator $e^{\mathcal{L}[\phi]t}$ can be decomposed as
\begin{equation}\label{e:lin_decomp1}
e^{\mathcal{L}[\phi]t}v(x)=\frac{1}{N}\phi'(x)\left<\widetilde{\Phi}_0,v\right>_{L^2_N}+\phi'(x)s_{p,N}(t)v(x)+\widetilde{S}_N(t)v(x)
\end{equation}
where 
\begin{equation}\label{e:sp}
s_{p,N}(t)v(x)=\frac{1}{N}\sum_{\xi\in\Omega_N\setminus\{0\}}\rho(\xi)e^{i\xi x}e^{\lambda_c(\xi)t}\left<\widetilde{\Phi}_\xi,\mathcal{B}_1(v)(\xi,\cdot)\right>_{L^2(0,1)}
\end{equation}
and
\[
\widetilde{S}_N(t)v(x)=S_{hf,N}(t)v(x)+\widetilde{S}_{lf,N}(t)v(x)+\widetilde{S}_{c,N}(t)v(x).
\]
Equipped with the above, we can establish our main set of linear estimates.  

\begin{proposition}[Linear Estimates]\label{P:lin_est}
Suppose that $\phi$ is a $1$-periodic stationary solution of \eqref{e:RDE_trav} which is diffusively spectrally stable.  
Given any $M\in\NM$, there exists a constant $C>0$ such that for all $t\geq 0$, $N\in\NM$ 
and all $0\leq l,m\leq M$ we have
\[
\left\|\partial_x^l\partial_t^m s_{p,N}(t) v\right\|_{L^2_N}\leq C (1+t)^{-1/4-(l+m)/2}\|v\|_{L^1_N}
\]
Furthermore, there exists constants $C,\eta>0$ such that for all $t\geq 0$ and  $N\in\NM$ 
we have
\[
\left\|\widetilde{S}_N(t) v\right\|_{L^2_N}\leq C\left( (1+t)^{-3/4}\|v\|_{L^1_N}+e^{-\eta t}\left\| v\right\|_{L^2_N}\right).
\]
\end{proposition}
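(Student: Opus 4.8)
The plan is to estimate each of the pieces in the decomposition \eqref{e:lin_decomp1}--\eqref{e:sp} separately, treating $s_{p,N}(t)$ (and its space-time derivatives) via a direct Riemann-sum-type bound on the Bloch frequencies, and treating $\widetilde S_N(t) = S_{hf,N} + \widetilde S_{lf,N} + \widetilde S_{c,N}$ by combining the exponential estimates \eqref{e:exp_est1}, \eqref{e:exp_est2} with a Riemann-sum bound for the higher-order term $\widetilde S_{c,N}$. The guiding principle throughout is that the sums $\frac{1}{N}\sum_{\xi\in\Omega_N}(\cdots)$ are Riemann sums (with mesh $\Delta\xi_j = 2\pi/N$) for the corresponding Bloch-wave integrals $\frac{1}{2\pi}\int(\cdots)\,d\xi$ arising in the localized theory of \cite{JNRZ_13_1,JZ_11_1}, so that bounds uniform in $N$ follow from the same Gaussian-type computations, provided one is careful that the Riemann sum is \emph{dominated} by (a constant times) the integral uniformly in $N$.

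\textbf{Step 1: the $L^1_N \to L^2_N$ bound on $s_{p,N}$ and its derivatives.} Starting from \eqref{e:sp}, apply $\partial_x^l\partial_t^m$; each $\partial_t$ brings down a factor $\lambda_c(\xi) = \mathcal{O}(\xi)$, and each $\partial_x$ acts on $e^{i\xi x}$ (producing $\mathcal{O}(\xi)$) and on $\rho$ and the $\xi$-dependent coefficients (producing bounded factors). Thus $\partial_x^l\partial_t^m s_{p,N}(t)v(x)$ is, up to bounded multipliers, $\frac{1}{N}\sum_{\xi\in\Omega_N\setminus\{0\}}\rho(\xi)\,\xi^{l+m}\,e^{i\xi x}e^{\lambda_c(\xi)t}\langle\widetilde\Phi_\xi,\mathcal{B}_1(v)(\xi,\cdot)\rangle_{L^2(0,1)}$ plus lower-order-in-$\xi$ terms. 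Using that $\Re\,\lambda_c(\xi) \le -d\xi^2/2$ for $|\xi|\le\xi_1$ (after possibly shrinking $\xi_1$), that $\|\widetilde\Phi_\xi\|_{L^2(0,1)}$ is bounded on the support of $\rho$, and that $\|\mathcal{B}_1(v)(\xi,\cdot)\|_{L^2(0,1)} \le \|\mathcal{B}_1(v)(\xi,\cdot)\|_{C^0(0,1)} \lesssim \|v\|_{L^1_N}$ uniformly in $\xi$ and $N$ (this is the standard Hausdorff--Young-type bound on the Bloch symbol, e.g.\ from \cite{JNRZ_13_1}), one bounds the $L^2_N$-norm via Parseval \eqref{e:parseval_per} by
\[
\left\|\partial_x^l\partial_t^m s_{p,N}(t)v\right\|_{L^2_N}^2 \lesssim \|v\|_{L^1_N}^2\cdot\frac{1}{N}\sum_{\xi\in\Omega_N\setminus\{0\}}\rho(\xi)^2\,\xi^{2(l+m)}\,e^{-d\xi^2 t}.
\]
The remaining sum is a Riemann sum, bounded uniformly in $N$ by $C\int_{\RM}\xi^{2(l+m)}e^{-d\xi^2 t}\,d\xi \le C(1+t)^{-1/2-(l+m)}$; taking square roots gives the claimed $(1+t)^{-1/4-(l+m)/2}$. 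The only subtlety is the uniform domination of the Riemann sum by the integral near $\xi=0$ when $l+m=0$: since the summand $\rho(\xi)^2 e^{-d\xi^2 t}$ is not monotone it must be split at $\xi \sim t^{-1/2}$, with the near-origin block contributing $\lesssim \frac1N\cdot(\text{number of }\xi_j\text{ with }|\xi_j|\lesssim t^{-1/2})\lesssim \min(1,t^{-1/2})$ and the tail handled by monotone comparison to the integral — all uniformly in $N$.

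\textbf{Step 2: the bound on $\widetilde S_N$.} The $S_{hf,N}$ and $\widetilde S_{lf,N}$ pieces are already controlled by the pure exponential estimates \eqref{e:exp_est1} and \eqref{e:exp_est2}, which contribute the $e^{-\eta t}\|v\|_{L^2_N}$ term. For $\widetilde S_{c,N}(t)v(x) = \frac1N\sum_{\xi\in\Omega_N\setminus\{0\}}\rho(\xi)e^{i\xi x}(i\xi)e^{\lambda_c(\xi)t}\big(\frac{\widetilde\Phi_\xi(x)-\phi'(x)}{i\xi}\big)\langle\widetilde\Phi_\xi,\mathcal{B}_1(v)(\xi,\cdot)\rangle_{L^2(0,1)}$, use analyticity of $\Phi_\xi$ near $\xi=0$ (Lemma \ref{L:specprep}(ii)(c)) so that $\xi^{-1}(\widetilde\Phi_\xi-\phi')$ has $L^2(0,1)$-norm bounded on $\mathrm{supp}\,\rho$, together with $|\xi|e^{\Re\lambda_c(\xi)t}\le|\xi|e^{-d\xi^2t/2}$ and the $L^1\to C^0$ Bloch-symbol bound as in Step 1; Parseval then gives $\|\widetilde S_{c,N}(t)v\|_{L^2_N}^2 \lesssim \|v\|_{L^1_N}^2\cdot\frac1N\sum_{\xi\in\Omega_N\setminus\{0\}}\rho(\xi)^2\xi^2 e^{-d\xi^2 t}\lesssim \|v\|_{L^1_N}^2(1+t)^{-3/2}$, i.e.\ the $(1+t)^{-3/4}\|v\|_{L^1_N}$ term. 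Summing the three contributions with the triangle inequality and renaming constants yields the stated estimate on $\widetilde S_N$.

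\textbf{Main obstacle.} The genuinely delicate point — and the only place the argument differs from the well-trodden localized theory — is establishing that the discrete Riemann sums $\frac1N\sum_{\xi\in\Omega_N}$ are bounded by the corresponding integrals \emph{with a constant independent of $N$}, uniformly for all $t\ge 0$. The danger is the block of frequencies $|\xi_j|\lesssim t^{-1/2}$ near the origin: for large $t$ this region contains at most $O(N t^{-1/2})+O(1)$ lattice points, each contributing $O(1/N)$, which is fine; but one must check the $t$-uniformity carefully and handle separately the regime $t\lesssim N^2$ (where the mesh $2\pi/N$ is fine compared to the Gaussian width $t^{-1/2}$, so the Riemann sum genuinely approximates the integral) versus $t\gtrsim N^2$ (where the $\xi=0$ term has been removed and the nearest surviving frequency $\xi_{\pm1}=\pm 2\pi/N$ already gives $e^{-d\xi^2 t}\le e^{-4\pi^2 d t/N^2}$, so the whole sum decays at least like the spectral-gap rate and is trivially bounded). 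Organizing this case split cleanly — essentially the content of "Remark \ref{R:riemann_sum1}" referenced in the introduction — is the crux; everything else is the standard sectorial/Gaussian machinery. I would isolate it as a lemma: for any $r\ge 0$ there is $C_r$ with $\frac1N\sum_{\xi\in\Omega_N}\rho(\xi)^2|\xi|^{2r}e^{-d\xi^2 t}\le C_r(1+t)^{-1/2-r}$ for all $N\in\NM$, $t\ge 0$, and then Steps 1--2 become immediate.
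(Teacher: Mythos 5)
Your overall strategy matches the paper's exactly: reduce to a Bloch-frequency sum via the subharmonic Parseval identity \eqref{e:parseval_per}, exploit the spectral gap (exponential pieces) and the quadratic touching $\Re\,\lambda_c(\xi)\lesssim -\xi^2$ (polynomial pieces), and then isolate the uniform-in-$N$ Riemann sum comparison as the key technical ingredient. That lemma you flag at the end of your ``Main obstacle'' paragraph is precisely what the paper proves in its Appendix (Lemma \ref{L:sum_poly_bd}), including your observation about the region $|\xi|\sim t^{-1/2}$: the paper handles the non-monotonicity for $r\geq 1$ by comparing against a monotone majorant $G(\xi,t)$ that plateaus at the maximum on $|\xi|\le R=(r/2d)^{1/2}t^{-1/2}$, which is a clean version of your split. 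Step 2 is likewise identical to the paper.

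There is, however, a genuine flaw in the way Step 1 justifies the $L^1_N\to L^2_N$ bound. You assert
\[
\|\mathcal{B}_1(v)(\xi,\cdot)\|_{L^2(0,1)} \le \|\mathcal{B}_1(v)(\xi,\cdot)\|_{C^0(0,1)} \lesssim \|v\|_{L^1_N},
\]
but neither inequality holds. By Parseval on $(0,1)$,
\[
\|\mathcal{B}_1(v)(\xi,\cdot)\|_{L^2(0,1)}^2 = \sum_{\ell\in\ZM}\bigl|\widehat{v}(\xi+2\pi\ell)\bigr|^2,
\]
and while $\sup_z|\widehat v(z)|\le\|v\|_{L^1_N}$, the $\ell^2$-sum over shifts is not controlled by $\|v\|_{L^1_N}^2$; indeed, averaging over $\xi\in\Omega_N$ it reproduces $\|v\|_{L^2_N}^2$, not $\|v\|_{L^1_N}^2$. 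The ``standard bound'' you are recalling from \cite{JNRZ_13_1} is not a bound on $\mathcal{B}_1(v)(\xi,\cdot)$ itself but on its \emph{pairing against a smooth test function}. Concretely, one expands
\[
\bigl\langle\widetilde\Phi_\xi,\mathcal{B}_1(v)(\xi,\cdot)\bigr\rangle_{L^2(0,1)} = \sum_{\ell\in\ZM}\widehat v(\xi+2\pi\ell)\,\overline{\widehat{\widetilde\Phi_\xi}(2\pi\ell)},
\]
uses $|\widehat v|\le\|v\|_{L^1_N}$ termwise, and then needs $\sum_\ell|\widehat{\widetilde\Phi_\xi}(2\pi\ell)|$ to be finite and uniformly bounded on $\mathrm{supp}\,\rho$. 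This last point is where the regularity $\widetilde\Phi_\xi\in H^1_{\rm per}(0,1)$ enters: the paper inserts weights $(1+|\ell|^2)^{\pm1/2}$ and applies Cauchy--Schwartz, giving
\[
\Bigl|\bigl\langle\widetilde\Phi_\xi,\mathcal{B}_1(v)(\xi,\cdot)\bigr\rangle_{L^2(0,1)}\Bigr|\lesssim \|v\|_{L^1_N}\,\|\widetilde\Phi_\xi\|_{H^1_{\rm per}(0,1)}.
\]
The same care is needed in your Step 2 treatment of $\widetilde S_{c,N}$, where the scalar factor is again this pairing (the extra $1$-periodic factor $\xi^{-1}(\Phi_\xi-\phi')$ sits outside the inner product and is handled by analyticity, exactly as you say). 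With this correction your argument becomes the paper's argument.
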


\begin{remark}
While the bounds above on the derivatives of $s_{p,N}(t)$ are largely unmotivated by our linear analysis, they will be essential in our
forthcoming nonlinear theory.
\end{remark}

\begin{proof}
First observe that, by definition of $\mathcal{B}_1$, we have
\begin{align*}
\left<\widetilde{\Phi}_\xi,\mathcal{B}_1(v)(\xi,\cdot)\right>_{L^2(0,1)}&=\int_0^1\overline{\widetilde{\Phi}_\xi(x)}\sum_{\ell\in\ZM}e^{2\pi i\ell x}\widehat{v}(\xi+2\pi\ell)dx\\
&=\sum_{\ell\in\ZM}\widehat{v}(\xi+2\pi\ell)\int_0^1\overline{\widetilde{\Phi}_\xi(x)} e^{2\pi i\ell x}dx\\
&=\sum_{\ell\in\ZM}\widehat{v}(\xi+2\pi\ell)\overline{\widehat{\widetilde{\Phi}_\xi}(2\pi\ell)}
\end{align*}
and hence, using the fact that \eqref{e:fourier_def} implies $\|\widehat{v}\|_{L^\infty(\RM)}\leq\|v\|_{L^1_N}$ along with Cauchy-Schwartz, it follows that
\begin{align*}
\rho(\xi)\left|\left<\widetilde{\Phi}_\xi,\mathcal{B}_1(v)(\xi,\cdot)\right>_{L^2(0,1)}\right|^2&\leq\rho(\xi)\|v\|_{L^1_N}^2
				\left(\sum_{\ell\in\ZM}(1+|\ell|^2)^{1/2}\left|\overline{\widehat{\widetilde{\Phi}_\xi}(2\pi\ell)}\right|(1+|\ell|^2)^{-1/2}\right)^2\\
&\leq C\|v\|_{L^1_N}^2\sup_{\xi\in[-\pi,\pi)}\left(\rho(\xi)\left\|\widetilde{\Phi}_\xi\right\|_{H^1_{\rm per}(0,1)}^2\right).
\end{align*}
valid for all $\xi\in\Omega_N$.  Using Lemma \ref{L:specprep}, it follows by Parseval's identity \eqref{e:parseval_per} that there exists constants $C,d>0$, independent of $N$,
such that
\begin{align*}
\left\|\partial_x^l\partial_t^m s_{p,N}(t)v\right\|_{L^2_N}^2 &= \frac{1}{N}\sum_{\xi\in\Omega_N}\left\|\rho(\xi)(i\xi)^l\left(\lambda_c(\xi)\right)^me^{\lambda_c(\xi)t}
	\left<\widetilde{\Phi}_\xi,\mathcal{B}_1(v)(\xi,\cdot)\right>_{L^2(0,1)}\right\|_{L^2(0,1)}^2\\
&\leq C\|v\|_{L^1_N}^2\left(\frac{1}{N}\sum_{\xi\in\Omega_N}|\xi|^{2(l+m)}e^{-2d\xi^2 t}\right).
\end{align*}
By similar considerations, we find that
\[
\left\|\widetilde{S}_{N}(t)v\right\|_{L^2_N}^2\leq Ce^{-2\eta t}\|v\|_{L^2_N}^2+C\|v\|_{L^1_N}^2\left(\frac{1}{N}\sum_{\xi\in\Omega_N}|\xi|^2 e^{-2d\xi^2t}\right).
\]

It remains to provide uniform in $N$ decay rates on the finite sums
\begin{equation}\label{sum1}
\frac{1}{N}\sum_{\xi\in\Omega_N}|\xi|^{2(l+m)}e^{-2d\xi^2 t}~~{\rm and}~~\frac{1}{N}\sum_{\xi\in\Omega_N}|\xi|^2 e^{-2d\xi^2t}.
\end{equation}
To gain some intuition on how to uniformly bound these sums, notice that they can be interpreted as Riemann sum approximations (up to a harmless rescaling) of the integrals 
\begin{equation}\label{int1}
    \int_{-\pi}^{\pi} \xi^{2(\ell+m)}e^{-2d\xi^2 t}d\xi, \qquad \int_{-\pi}^{\pi} \xi^{2}e^{-2d\xi^2 t}d\xi,
\end{equation}
which, through an elementary scaling argument, exhibit $(1+t)^{-1/2-(\ell+m)}$ and $(1+t)^{-3/2}$ decay for large time, respectively.  The proof that the Riemann sums are uniformly controlled by these decay rates is provided in Lemma \ref{L:sum_poly_bd} in the Appendix, which completes the proof.
\end{proof}

\begin{remark}\label{R:riemann_sum1}
The result of Corollary \ref{C:min_thm} can be seen from the above analysis, at least at the linear level.  Indeed, 
following the methods in \cite[Section 5]{HJP_1} one sees that, for large $N$, the sums in \eqref{sum1} are good approximations of the 
respective integrals in \eqref{int1} for times up to $t=\mathcal{O}(N^2)$, corresponding to an observed polynomial decay of perturbations
on such a timescale.  For larger times, however, the exponential nature of the summands dominate and the sums decay monotonically to
zero at exponential rates, corresponding to an exponential decay of perturbations on these longer timescales.
\end{remark}

Before continuing to our nonlinear analysis, we pause to interpret the above results.  Suppose that  $\phi$ is a $1$-periodic diffusively spectrally stable stationary
solution of \eqref{e:RDE_trav}, and let $u(x,t)$ be a solution of \eqref{e:RDE_trav} with initial data $u(x,0)=\phi(x)+\eps v(x)$ with $\eps\ll 1$ and $v\in L^1_N\cap L^2_N$.  
From Proposition \ref{P:lin_est}, it follows that one may expect that the solution $u$ behaves for large time like
\begin{equation}\label{e:intuition}
\begin{aligned}
u(x,t)&\approx \phi(x)+\eps e^{\mathcal{L}[\phi]t}v(x)\\
&\approx \phi(x)+\eps \phi'(x)\left(\frac{1}{N}\left<\widetilde{\Phi}_0,v\right>_{L^2_N}+s_{p,N}(t)v(x)\right)\\
&\approx \phi\left(x+\eps\left(\frac{1}{N}\left<\widetilde{\Phi}_0,v\right>_{L^2_N}+s_{p,N}(t)v(x)\right)\right),
\end{aligned}
\end{equation}
which is a space-time dependent phase modulation of the underlying periodic wave $\phi$.  More precisely, note the phase modulation naturally
decomposes into two parts: a spatially independent component coming from the projection of the perturbation onto the translational eigenvalue at the origin, and a space-time dependent
component accounting for the dynamics associated to the accumulation of Bloch eigenvalues near the origin for large $N$.
In the next section, we use this linear intuition to develop a nonlinear iteration scheme and complete the proof of Theorem \ref{T:main} and its corollaries.

\section{Uniform Nonlinear Asymptotic Stability}\label{S:nlin_stab}

In this section, we use the decomposition of the linearized solution operator $e^{\mathcal{L}[\phi]t}$ and the associated linear estimates
in Proposition \ref{P:lin_est} to develop a nonlinear iteration scheme to complete the proof of Theorem \ref{T:main}.  
As discussed at the end of Section \ref{S:lin_stab}, the linear estimates in Proposition \ref{P:lin_est} suggest that if $\phi$ is a $1$-periodic
diffusively spectrally stable stationary solution of \eqref{e:RDE_trav}, then $N$-periodic perturbations of $\phi$ should, for large time, behave
essentially like space-time modulated version of $\phi$.  This suggests a nonlinear decomposition of $N$-periodic perturbations of $\phi$,
which we develop in Section \ref{S:nlin_decomp} below.   With this decomposition in hand, the proof of Theorem \ref{T:main} will be completed
in Section \ref{S:nlin_iteration} through an appropriate nonlinear iteration scheme.

\subsection{Nonlinear Decomposition and Perturbation Equations}\label{S:nlin_decomp}

Suppose $\phi$ is a $1$-periodic diffusively spectrally stable stationary solution of \eqref{e:RDE_trav}.
Motivated by the work in the previous section, we introduce a decomposition of nonlinear perturbations of the background wave $\phi$
which accounts for the critical phase-shift contribution $s_{p,N}(t)$ of the linear operator.  

Motivated by \eqref{e:intuition}, we begin by letting 
$\widetilde{u}(x,t)$ be a solution
of \eqref{e:RDE_trav} and define a spatially modulated function
\begin{equation}\label{e:mod}
u(x,t):=\widetilde{u}\left(x-\frac{1}{N}\gamma(t)-\psi(x,t),t\right)
\end{equation}
where both $\gamma:\RM_+\to\RM$ and $\psi:\RM\times\RM_+\to\RM$ are functions to be determined later.
Taking $\widetilde{u}$ to be initially close to $\phi$ in some sense, we attempt to decompose $u$ as
\begin{equation}\label{e:nlin_residual}
u(x,t)=\phi(x)+v(x,t),
\end{equation}
where here $v$ denotes a nonlinear perturbation.  Note that the form of the modulation in \eqref{e:mod} is a combination of (i) a time-dependent modulation, as one would
utilize in the proof of Proposition \ref{P:sub_stab}, and (ii) a space-time dependent modulation, as is used in the study of localized perturbations of 
periodic waves \cite{JNRZ_13_1,JZ_11_1}.  Consequently, the forthcoming nonlinear analysis is essentially a mixture of these two approaches.

As a preliminary step, we derive equations that must be satisfied by the perturbation $v$ and the modulation functions $\gamma$ and $\psi$.
To this end, we note that in \cite{JNRZ_13_1,JZ_11_1} it is shown through elementary, but tedious, manipulations that if $u(x,t)$ is as above then the triple $(v,\gamma,\psi)$ satisfies
\begin{equation}\label{e:pert1}
    (k\d_t - k\mathcal{L}[\phi])\left(v+\frac{1}{N}\phi'\gamma +\phi'\psi\right) = k\widetilde{\mathcal{N}}, \quad\text{where}~ 
    		k\widetilde{\mathcal{N}} := \widetilde{\mathcal{Q}} + k\widetilde{\mathcal{R}}_x + k\widetilde{\mathcal{S}}_t + \widetilde{\mathcal{T}},
\end{equation}
with
\[
    \widetilde{\mathcal{Q}}:= f(\phi+v) - f(\phi) - Df(\phi)v,\quad
    \widetilde{\mathcal{R}}:= -\psi_t v - \frac{1}{N}\gamma_t v + k \left(\frac{\psi_x}{1-\psi_x} v_x\right) + k \left(\frac{\psi_x^2}{1-\psi_x} \phi'\right),
\]
and
\[
    \widetilde{\mathcal{S}}:= \psi_x v,\quad
    \widetilde{\mathcal{T}}:= -\psi_x\left[f(\phi+v) - f(\phi)\right].
\]
Rearranging slightly as in \cite{DS_18} to remove temporal derivatives of the perturbation $v$ in present in $\widetilde{\mathcal{N}}$ in \eqref{e:pert1} yields the following.

\begin{lemma}\label{L:nlin_pert}
The nonlinear residual $v$ defined in \eqref{e:nlin_residual} and modulation functions $\gamma$ and $\psi$ in \eqref{e:mod} satisfy
\begin{equation}\label{e:pert2}
    (k\d_t - k\mathcal{L}[\phi])\left((1-\psi_x)v+\frac{1}{N}\phi'\gamma +\phi'\psi\right) = k\mathcal{N}, \quad\text{where}~ k\mathcal{N} = \mathcal{Q} + k\mathcal{R}_x,
\end{equation}
where here
\begin{equation}
    \mathcal{Q} = (1-\psi_x)\left[f(\phi+v) - f(\phi) - Df(\phi)v\right],
\end{equation}
and
\begin{equation}
    \mathcal{R} = -\psi_t v - \frac{1}{N}\gamma_t v + c\psi_x v + k (\psi_x v)_x + k \left(\frac{\psi_x}{1-\psi_x} v_x\right) + k \left(\frac{\psi_x^2}{1-\psi_x} \phi'\right).
\end{equation}
\end{lemma}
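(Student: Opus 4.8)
The plan is to deduce \eqref{e:pert2} from \eqref{e:pert1} by a purely algebraic rearrangement; no new analytic input is needed, and the one point of genuine substance is that $\mathcal{L}[\phi]$, being a differential operator, does not commute with multiplication by the scalar function $\psi_x$. Starting from \eqref{e:pert1}, the only term in $k\widetilde{\mathcal{N}}$ that contains a temporal derivative of $v$ is $k\widetilde{\mathcal{S}}_t = k\d_t(\psi_x v)$. First I would move this single term to the left-hand side. Using only linearity of $\d_t$, this turns
\[
(k\d_t - k\mathcal{L}[\phi])\left(v+\frac{1}{N}\phi'\gamma +\phi'\psi\right) - k\d_t(\psi_x v) = k\d_t\left((1-\psi_x)v+\frac{1}{N}\phi'\gamma +\phi'\psi\right) - k\mathcal{L}[\phi]\left(v+\frac{1}{N}\phi'\gamma +\phi'\psi\right),
\]
so that the $\d_t$-part of the residual now carries the factor $(1-\psi_x)$, at the expense of having $\mathcal{L}[\phi]$ still acting on the old residual.

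\textbf{Restoring the operator form and sorting by order.} Next I would write $v+\frac{1}{N}\phi'\gamma+\phi'\psi = \big((1-\psi_x)v+\frac{1}{N}\phi'\gamma+\phi'\psi\big) + \psi_x v$ and use linearity of $\mathcal{L}[\phi]$ to split off $k\mathcal{L}[\phi](\psi_x v)$, obtaining
\[
(k\d_t - k\mathcal{L}[\phi])\left((1-\psi_x)v+\frac{1}{N}\phi'\gamma +\phi'\psi\right) = \widetilde{\mathcal{Q}} + k\widetilde{\mathcal{R}}_x + \widetilde{\mathcal{T}} + k\mathcal{L}[\phi](\psi_x v).
\]
I would then expand $k\mathcal{L}[\phi](\psi_x v) = k^2(\psi_x v)_{xx} + kc(\psi_x v)_x + Df(\phi)\,\psi_x v$ from the definition of $\mathcal{L}[\phi]$ and regroup by differential order. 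The two spatial-derivative pieces $k^2(\psi_x v)_{xx}+kc(\psi_x v)_x = k\big(k(\psi_x v)_x + c\psi_x v\big)_x$ combine with $k\widetilde{\mathcal{R}}_x$ into a single conservative term $k\mathcal{R}_x$ with $\mathcal{R} = \widetilde{\mathcal{R}} + c\psi_x v + k(\psi_x v)_x$, which is precisely the stated $\mathcal{R}$. The zeroth-order piece $Df(\phi)\psi_x v$ is exactly the correction needed so that, using $\widetilde{\mathcal{Q}} = f(\phi+v)-f(\phi)-Df(\phi)v$ and $\widetilde{\mathcal{T}} = -\psi_x[f(\phi+v)-f(\phi)]$,
\[
\widetilde{\mathcal{Q}} + \widetilde{\mathcal{T}} + Df(\phi)\psi_x v = (1-\psi_x)\left[f(\phi+v)-f(\phi)-Df(\phi)v\right] = \mathcal{Q}.
\]
Collecting these identifications gives $k\mathcal{N} = \mathcal{Q} + k\mathcal{R}_x$ and hence \eqref{e:pert2}.

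\textbf{Main obstacle.} Honestly, there is no serious obstacle: once \eqref{e:pert1} is granted (from \cite{JNRZ_13_1,JZ_11_1}), everything is elementary bookkeeping. The only thing one must be careful about is tracking the commutator term $k\mathcal{L}[\phi](\psi_x v)$ produced when the temporal-derivative term is absorbed into the operator, and verifying that its zeroth-order part cancels against $\widetilde{\mathcal{Q}}+\widetilde{\mathcal{T}}$ while its first- and second-order parts get folded into $k\mathcal{R}_x$. As throughout this type of argument, all manipulations are carried out formally, i.e. under the provisional assumption that $v,\gamma,\psi$ are as regular as the computation requires and that $1-\psi_x$ remains bounded away from $0$ so that the change of variables in \eqref{e:mod} is invertible; these qualitative properties are recovered a posteriori in the nonlinear iteration of Section \ref{S:nlin_iteration}.
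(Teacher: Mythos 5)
Your rearrangement is correct and follows exactly the route the paper itself indicates (citing \cite{DS_18} for the same algebraic manipulation): absorb the $k\widetilde{\mathcal{S}}_t = k\partial_t(\psi_x v)$ term into the time derivative on the left, split off the resulting commutator $k\mathcal{L}[\phi](\psi_x v)$, fold its zeroth-order part into $\widetilde{\mathcal{Q}}+\widetilde{\mathcal{T}}$ to produce $\mathcal{Q}$, and fold its first- and second-order parts into $k\widetilde{\mathcal{R}}_x$ to produce $k\mathcal{R}_x$. The bookkeeping checks out, and your closing caveat about formal regularity and invertibility of $1-\psi_x$ correctly matches the paper's treatment.
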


Our goal is to now obtain a closed nonlinear iteration scheme by integrating \eqref{e:pert2} and exploiting the decomposition of the linear solution operator
$e^{\mathcal{L}[\phi]t}$ provided in \eqref{e:lin_decomp1}.  To motivate this, we first provide an informal description of how to determine the modulation
functions $\gamma$ and $\psi$ to separate out the principle nonlinear behavior.  Using Duhamel's formula, we can write \eqref{e:pert2} as the implicit
integral equation
\[
    (1-\psi_x(x,t))v(x,t)+\frac{1}{N}\phi'(x)\gamma(t) +\phi'(x)\psi(x,t) = e^{\mathcal{L}[\phi]t}
    v(x,0) + \int_0^t e^{\mathcal{L}[\phi](t-s)}\mathcal{N}(x,s)ds.
\]
with initial data $\gamma(0)=0$, $\psi(\cdot,0)=0$ and $v(\cdot,0)=\widetilde{u}(\cdot,0)-\phi(\cdot)$.  
Recalling that \eqref{e:lin_decomp1}  implies the linear solution operator can be decomposed as
\begin{equation}\label{e:lin_decomp}
    e^{\mathcal{L}[\phi]t}f(x) = \phi'(x)\underbrace{\left(\frac{1}{N} \LA\widetilde{\Phi}_0,f\RA_{L^2_N} + s_{p,N}(t)f(x)\right)}_{\text{phase modulation}} 
    + \underbrace{\widetilde{S}(t)f(x)}_{\text{faster decaying residual}}
\end{equation}
it follows that we can remove the principle (i.e. slowest decaying) part of the nonlinear perturbation by implicitly defining
\begin{equation}\label{e:mod_slave1}
    \left\{\begin{aligned}
    &\gamma(t) \sim \LA\widetilde{\Phi}_0,v(0)\RA_{L^2_N} + \int_0^t \LA\widetilde{\Phi}_0,\mathcal{N}(s)\RA_{L^2_N} ds\\
    &\psi(x,t) \sim s_{p,N}(t)v(0) + \int_0^t s_{p,N}(t-s)\mathcal{N}(s)ds,
    \end{aligned}\right.
\end{equation}
where here $\sim$ indicates equality for $t\geq 1$.  This choice then yields the implicit description 
\begin{equation}\label{e:v1}
v(x,t) \sim \psi_x(x,t)v(x,t) + \widetilde{S}(t)v(0) + \int_0^t \widetilde{S}(t-s)\mathcal{N}(s) ds
\end{equation}
involving only the faster decaying residual component of the linear solution operator.  

Note the above choices clearly cannot extend all the way to $t=0$  due to an incompatibility of these choices with the initial data on $(v,\gamma,\psi)$.  
Here, we choose to keep the above choices for all $t\geq 1$ while interpolating
between the initial data and the right hand sides of \eqref{e:mod_slave1}-\eqref{e:v1} on the initial layer $0\leq t\leq 1$.
Specifically, we let $\chi(t)$ be a smooth cutoff function that is zero for $t\leq 1/2$ and one for $t\geq 1$, and define the modulation
functions $\gamma$ and $\psi$ implicitly for all $t\geq 0$ as
\begin{equation}\label{e:mod_slave2}
    \left\{\begin{aligned}
    &\gamma(t) = \chi(t)\left[ \LA\widetilde{\Phi}_0,v(0)\RA_{L^2_N} + \int_0^t \LA\widetilde{\Phi}_0,\mathcal{N}(s)\RA_{L^2_N} ds\right]\\
    &\psi(x,t) =\chi(t)\left[ s_{p,N}(t)v(0) + \int_0^t s_{p,N}(t-s)\mathcal{N}(s)ds\right],
    \end{aligned}\right.
\end{equation}
leaving the system
\begin{equation}\label{e:v2}
\begin{aligned}
v(x,t) &= \left(1-\chi(t)\right)\left[e^{\mathcal{L}[\phi]t}v(x,0)+\int_0^t e^{\mathcal{L}[\phi](t-s)}\mathcal{N}(s)ds\right]\\
&\quad+\chi(t)\left(\psi_x(x,t)v(x,t)+ \widetilde{S}(t)v(0) + \int_0^t \widetilde{S}(t-s)\mathcal{N}(s) ds\right).
\end{aligned}
\end{equation}
We note that from the differential equation \eqref{e:pert2}, along with the system of integral equations \eqref{e:mod_slave2}-\eqref{e:v2}, we readily obtain
short-time existence and continuity with respect to $t$ of a solution $(v,\psi_t,\psi_x)\in H^K_N$ and $\gamma\in W^{1,\infty}(0,\infty)$ 
by a standard contraction mapping argument, treating
\eqref{e:pert2} as a forced heat equation: see, for example, \cite{Hen}.  Associated with this solution, we now aim to obtain $L^2$ estimates on $(v,\gamma_t,\psi_x,\psi_t)$ and some of their
derivatives.

Noting that the nonlinear residual $\mathcal{N}$ in \eqref{e:pert2} involves
only derivatives of the modulation functions $\gamma$ and $\psi$, we may then expect to extract a closed system in $(v,\gamma_t,\psi_x,\psi_t)$, and some of their derivatives,
and then recover $\gamma$ and $\psi$ through the slaved system \eqref{e:mod_slave2}.  In particular, observe that using \eqref{e:v2} we see that control
of $v$ in, say, $L^2_N$ requires (in part) control $v$ in $H^2_N$.  This loss of derivatives is compensated by the following result, established by energy
estimates in \cite{JNRZ_13_1,JZ_11_1}, which uses the dissipative nature of the governing evolution equation to control higher derivatives of $v$ by lower ones, enabling us to close
our nonlinear iteration.

\begin{proposition}[Nonlinear Damping]\label{P:nonlin_damp}
Suppose the nonlinear perturbation $v$ defined in \eqref{e:nlin_residual} satisfies $v(\cdot,0)\in H^K_N$, and suppose that for some $T>0$ the $H^K_N$ norm of $v$ and $\psi_t$,
the $H^{K+1}_N$ norm of $\psi_x$, and the $L^\infty$ norms of $\gamma$ and $\gamma_t$ 
remain bounded by a sufficiently small constant for all $0\leq t\leq T$.  Then there exist positive constants $\theta,C>0$,
both independent of $N$ and $T$, such that
\[
    \|v(t)\|_{H^K_N}^2 \lesssim e^{-\theta t}\|v(0)\|_{H^K_N}^2 + \int_0^t e^{-\theta(t-s)}\left(\left\|v(s)\right\|_{L^2_N}^2 + \left\|\psi_x(s)\right\|_{H^{K+1}_N}^2 + \left\|\psi_t(s)\right\|_{ H^{K}_N}^2 + \left|\gamma_t(s)\right|^2\right)ds
\]
for all $0\leq t\leq T$.
\end{proposition}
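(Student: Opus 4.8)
The plan is to run a standard high-order energy estimate on the perturbation equation, exploiting the parabolic (dissipative) structure of $\mathcal{L}[\phi]$, and to show that the constants produced are uniform in $N$. First I would recall that the perturbation $v$ defined by \eqref{e:nlin_residual} satisfies a quasilinear parabolic equation obtained by rewriting \eqref{e:RDE_trav} for $\widetilde u$ in terms of $u$ and then subtracting the profile equation \eqref{e:profile}: substituting \eqref{e:mod} into \eqref{e:RDE_trav} and using \eqref{e:nlin_residual}, one gets an equation of the schematic form
\[
k v_t = k^2 v_{xx} + k c\, v_x + Df(\phi)v + \mathcal{M}(v,\psi_x,\psi_t,\gamma_t),
\]
where $\mathcal{M}$ collects all genuinely nonlinear terms; crucially $\mathcal{M}$ is a sum of terms each of which is either quadratic (or higher) in $(v,v_x,v_{xx},\psi_x,\psi_t,\gamma_t)$, or linear in the modulation derivatives $(\psi_x,\psi_t,\gamma_t)$ with coefficients depending smoothly on $\phi,\phi',v$. (This is just \eqref{e:pert2} unwound back into a PDE for $v$ alone; the point of the Duhamel rearrangement earlier was to kill $v_t$ from the forcing, but for the damping estimate one works directly with the differential form.) Throughout, the hypothesis that $\|v\|_{H^K_N}$, $\|\psi_t\|_{H^K_N}$, $\|\psi_x\|_{H^{K+1}_N}$, and $\|\gamma\|_{L^\infty},\|\gamma_t\|_{L^\infty}$ are all bounded by a fixed small constant $\epsilon_0$ on $[0,T]$ will be used repeatedly to absorb top-order nonlinear contributions into the good dissipative term and to make all Sobolev-multiplication and Moser-type estimates go through with $N$-independent constants (Sobolev embedding on $(0,N)$ with periodic boundary conditions has $N$-independent constants once one controls enough derivatives, e.g. $\|g\|_{L^\infty_N}\lesssim \|g\|_{H^1_N}$).

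The core computation: for $0\le j\le K$ apply $\partial_x^j$ to the $v$-equation, pair in $L^2_N$ with $\partial_x^j v$, and integrate by parts. The principal part $k^2\partial_x^2$ yields $-k^2\|\partial_x^{j+1}v\|_{L^2_N}^2$, a good negative term of order $j+1$; the convective term $kc\,\partial_x$ contributes nothing after integration by parts (periodicity); and the zeroth-order term $Df(\phi)v$ contributes a bounded-coefficient term controlled by $\|v\|_{H^j_N}^2$. Summing over $0\le j\le K$ with suitable positive weights gives
\[
\frac{k}{2}\frac{d}{dt}\|v\|_{H^K_N}^2 \le -\frac{k^2}{2}\sum_{j=0}^{K}\|\partial_x^{j+1}v\|_{L^2_N}^2 + C\|v\|_{H^K_N}^2 + (\text{nonlinear terms}).
\]
The nonlinear terms are estimated by Cauchy--Schwarz and Moser/Gagliardo--Nirenberg inequalities: every term is either (a) at least quadratic in $v$ and its derivatives up to order $K+1$, in which case, since $\|v\|_{H^K_N}\le\epsilon_0$, one extra derivative (order $\le K+1$) appears at most linearly and is absorbed into the dissipation $-\tfrac{k^2}{2}\sum\|\partial_x^{j+1}v\|^2$, the remaining factors being $O(\epsilon_0)$; or (b) linear in $(\psi_x,\psi_t,\gamma_t)$ and their derivatives up to order $K+1$, $K$, $0$ respectively — exactly the quantities appearing in the statement — with the remaining factors bounded, so these are controlled by $\eta\sum\|\partial_x^{j+1}v\|^2 + C_\eta(\|\psi_x\|_{H^{K+1}_N}^2 + \|\psi_t\|_{H^K_N}^2 + |\gamma_t|^2)$ by Young's inequality. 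Choosing $\epsilon_0$ and $\eta$ small, the top-order terms are absorbed, leaving
\[
\frac{d}{dt}\|v\|_{H^K_N}^2 \le -\theta_0 \|v\|_{H^{K+1}_N}^2 + C\|v\|_{H^K_N}^2 + C\big(\|v\|_{L^2_N}^2 + \|\psi_x\|_{H^{K+1}_N}^2 + \|\psi_t\|_{H^K_N}^2 + |\gamma_t|^2\big).
\]
Now invoke the Poincaré-type inequality $\|v\|_{H^K_N}^2 \le \|v\|_{L^2_N}^2 + \|v\|_{H^{K+1}_N}^2$ — more precisely $\sum_{j=1}^{K}\|\partial_x^j v\|^2 \lesssim \eta'\sum_{j=1}^{K}\|\partial_x^{j+1}v\|^2 + C_{\eta'}\|v\|_{L^2_N}^2$ by interpolation (again with $N$-independent constant, since this is just an $\ell^2$ inequality on Fourier coefficients) — to dominate the $+C\|v\|_{H^K_N}^2$ term: split it as $C\|v\|_{L^2_N}^2 + C\sum_{j\ge1}\|\partial_x^j v\|^2$, absorb the second piece into $-\theta_0\|v\|_{H^{K+1}_N}^2$ at the cost of enlarging the $\|v\|_{L^2_N}^2$ coefficient. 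This produces, for some $\theta>0$ independent of $N$ and $T$,
\[
\frac{d}{dt}\|v\|_{H^K_N}^2 \le -\theta\|v\|_{H^K_N}^2 + C\big(\|v\|_{L^2_N}^2 + \|\psi_x\|_{H^{K+1}_N}^2 + \|\psi_t\|_{H^K_N}^2 + |\gamma_t|^2\big),
\]
and Grönwall's inequality gives the asserted integral bound.

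The main obstacle — and the only place requiring genuine care rather than bookkeeping — is the quasilinear top-order term: the nonlinearity $\mathcal{M}$ contains a contribution like $k\,\partial_x\!\big(\tfrac{\psi_x}{1-\psi_x}v_x\big)$ (visible in $\mathcal{R}$ of Lemma \ref{L:nlin_pert}), which at order $j=K$ produces a term with $K+1$ derivatives on $v$ multiplied by $\psi_x$; pairing with $\partial_x^K v$ gives $\int \psi_x\,(\partial_x^{K+1}v)(\partial_x^K v)$-type expressions that are NOT obviously lower order. The resolution is the classical one: the genuinely dangerous piece can, after an integration by parts, be written as $-\tfrac12\int (\partial_x\psi_x)(\partial_x^K v)^2$ plus commutator terms that are of order $\le K$ in $v$; since $\|\partial_x\psi_x\|_{L^\infty_N}\lesssim\|\psi_x\|_{H^{K+1}_N}\le\epsilon_0$ is small, this is absorbed, and the commutator terms are handled by the Kato--Ponce / Moser estimate. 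One must also verify that the coefficient $\tfrac{1}{1-\psi_x}$ and its derivatives are bounded in the relevant Sobolev norms — immediate once $\|\psi_x\|_{L^\infty_N}\le\epsilon_0<1$ — and that multiplying two $H^K_N$ functions lands in $H^K_N$ with $N$-independent constant, which holds because $H^K_N$ is an algebra for $K\ge1$ with the embedding constant independent of the period. Everything else (the quadratic-in-$v$ terms, the linear-in-modulation terms, the zeroth-order term) is routine. Since this precise energy estimate was carried out in \cite{JNRZ_13_1,JZ_11_1} on the whole line, the only new point here is tracking $N$-uniformity, which follows because all constants above are either fixed multiples of $k,c,\|\phi\|_{H^{K+2}}$ or come from Sobolev/interpolation inequalities on the torus $(0,N)$ whose constants do not degenerate as $N\to\infty$.
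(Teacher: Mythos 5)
Your argument reproduces the paper's: both rewrite \eqref{e:pert2} as a forced heat equation, run a high-order $L^2$ energy estimate that exploits the parabolic dissipation, absorb the top-order quasilinear terms via smallness of $\psi_x$, use Sobolev interpolation to trade $H^{K+1}_N$ dissipation for exponential decay in $H^K_N$, and close with Gronwall, with all constants $N$-uniform. The only organizational difference is the choice of multiplier: the paper keeps the prefactor $(1-\psi_x)$ on the left of the rewritten equation and tests against $\sum_{j=0}^K(-1)^j\partial_x^{2j}v/(1-\psi_x)$, canceling the variable coefficient so the principal part is the constant-coefficient heat operator, whereas you divide through by $1-\psi_x$ first and then apply $\partial_x^j$ before pairing with $\partial_x^j v$ and resolve the resulting quasilinear top-order piece by an integration by parts plus commutator estimates — these are two bookkeeping conventions for the same computation and yield the same final bound.
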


\begin{proof}
The proof strategy is by now standard, and can be found, or example, in \cite{JNRZ_13_1,JZ_11_1}.  For completeness, here we simply outline the main details.  First, one
rewrites \eqref{e:pert2} as the forced heat equation
\begin{align*}
(1-\psi_x)\left(kv_t-k^2v_{xx}\right)&=-k\left(\psi_t+\frac{1}{N}\gamma_t\right)\phi'+k^2\left(\frac{\psi_x}{1-\psi_x}~\phi'\right)_x
-\psi_xf(\phi+v)+f(\phi+v)-f(\phi)\\
&\quad+kv_x\left(c-\psi_t-\frac{\gamma_t}{N}\right)
+k^2\left[\left(\frac{1}{1-\psi_x}+1\right)\psi_xv_x\right]_x.
\end{align*}
Multiplying by $\sum_{j=0}^K(-1)^j\frac{\partial_x^{2j}v}{1-\psi_x}$, integrating over $[0,N]$, using integration by parts and rearranging yields a bound of the form\footnote{Below,
the symbol $A\lesssim B$ implies there exists a constant $C>0$, independent of $N$, such that $A\leq CB$.}
\begin{align*}
\partial_t\|v\|_{H^K_N}^2+2k\|v\|_{H^{K+1}_N}^2&\lesssim \eps\|v\|_{H^{K+1}_N}^2+\|v\|_{L^2_N}^2+\frac{1}{\eps}\left\|\frac{\psi_t}{1-\psi_x}~\phi'\right\|_{H^{K-1}_N}^2\\
&+\frac{|\gamma_t|^2}{N^2\eps}\left\|\frac{1}{1-\psi_x}~\phi'\right\|_{H^{K-1}_N}^2+\frac{1}{\eps}\left\|\frac{1}{1-\psi_x}\partial_x\left(\frac{\psi_x}{1-\psi_x}~\phi'\right)\right\|_{H^{K-1}_N}^2\\
&+\frac{1}{\eps}\left\|\frac{\psi_x}{1-\psi_x}f(\phi+v)\right\|_{H^{K-1}_N}^2+\frac{1}{\eps}\left\|\frac{1}{1-\psi_x}\left(f(\phi+v)-f(\phi)\right)\right\|_{H^{K-1}_N}^2\\
&+\frac{1}{\eps}\left\|\frac{v_x}{1-\psi_x}\right\|_{H^{K-1}_N}^2+\frac{1}{\eps}\left\|\frac{\psi_tv_x}{1-\psi_x}\right\|_{H^{K-1}_N}^2
	+\frac{|\gamma_t|^2}{N^2\eps}\left\|\frac{v_x}{1-\psi_x}\right\|_{H^{K-1}_N}^2\\
&+\frac{1}{\eps}\left\|\frac{1}{1-\psi_x}\partial_x\left[\left(\frac{1}{1-\psi_x}+1\right)\psi_xv_x\right]\right\|_{H^{K-1}_N}^2,
\end{align*}
where here $\eps>0$ is an arbitrary constant\footnote{Introduced by the application of the Cauchy inequality with $\eps$ throughout.} independent of $N$.
Using the Sobolev interpolation
\[
\|g\|_{H^K_N}^2\leq \widetilde{C}^{-1}\|\partial_x^{K+1}g\|_{L^2_N}^2+\widetilde{C}\|g\|_{L^2_N}^2,
\]
valid for some constant $\widetilde{C}>0$ independent of $N$, now gives
\[
\frac{d}{dt}\|v\|_{H^K_N}^2(t)\leq -\theta\|v(t)\|_{H^{K}_N}^2+C\left(\|v(t)\|_{L^2_N}^2+\|\psi_x\|_{H^{K+1}_N}^2+\|\psi_t\|_{H^K_N}^2+|\gamma_t(t)|^2\right).
\]
The proof is now complete by an application of Gronwall's inequality.
\end{proof}

\subsection{Nonlinear Iteration}\label{S:nlin_iteration}

To complete the proof of Theorem \ref{T:main}, associated to the solution $(v,\gamma_t,\gamma_t,\gamma_x)$ of of \eqref{e:mod_slave2}-\eqref{e:v2}
we define, so long as it is finite, the function
\[
    \zeta(t) := \sup_{0\leq s\leq t}\left( \left\|v(s)\right\|_{H^K_N}^2 + \left\|\psi_x(s)\right\|_{H^{K+1}_N}^2 + \left\|\psi_t(s)\right\|_{ H^{K}_N}^2 + \left|\gamma_t(s)\right|\right)^{1/2}(1+s)^{3/4}.
\]
Combining the linear estimates in Proposition \ref{P:lin_est} with the damping estimate in Proposition \ref{P:nonlin_damp}, we now establish a key inequality for $\zeta$ which will
yield global existence and stability of our solutions.

\begin{proposition}\label{P:iteration}
Under the assumptions of Theorem \ref{T:main}, there exist positive constants $C,\eps>0$, both independent of $N$, such that if $v(\cdot,0)$ is such that
\[
E_0:=\|v(\cdot,0)\|_{L^1_N\cap H^K_N}\leq \eps\quad{\rm and}\quad\zeta(T)\leq \eps 
\]
for some $T>0$, then we have
\[
    \zeta(t) \leq C\left(E_0 + \zeta^2(t)\right) 
\]
valid for all $0\leq t\leq T$.
\end{proposition}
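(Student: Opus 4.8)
The plan is to bound each of the four quantities entering $\zeta$ --- namely $\|v\|_{H^K_N}$, $\|\psi_x\|_{H^{K+1}_N}$, $\|\psi_t\|_{H^K_N}$, and $|\gamma_t|$ --- by a fixed multiple of $(E_0+\zeta^2(t))(1+s)^{-3/4}$ (with the evident modification for the $\gamma_t$ slot) for all $0\le s\le t$, and then take the supremum. The starting point is the coupled integral system \eqref{e:mod_slave2}--\eqref{e:v2} together with the linear splitting \eqref{e:lin_decomp}. The key structural fact is that every term of $k\mathcal{N}=\mathcal{Q}+k\mathcal{R}_x$ is at least \emph{quadratic} in $(v,\psi_x,\psi_t,\gamma_t)$ and their $x$-derivatives: $\mathcal{Q}=O(|v|^2)$, and each summand of $\mathcal{R}$ carries an explicit factor of $\psi_x$, $\psi_t$, or $\gamma_t$. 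Hence, using the definition of $\zeta$, the Sobolev embedding $H^1_N\hookrightarrow L^\infty$ (with constant uniform in $N$), Moser-type product estimates, the elementary bound $\|fg\|_{L^1_N}\le\|f\|_{L^2_N}\|g\|_{L^2_N}$, and $\tfrac{1}{N}\|v\|_{L^1_N}\le\|v\|_{L^2_N}$ for the $\tfrac{1}{N}\gamma_t v$ term, one gets for all $0\le s\le t$
\[
\|\mathcal{Q}(s)\|_{L^1_N\cap L^2_N}+\|\mathcal{R}(s)\|_{W^{1,1}_N\cap H^1_N}\lesssim \zeta(t)^2(1+s)^{-3/2},
\]
which is exactly the decay needed below (the $W^{1,1}_N\cap H^1_N$ control of $\mathcal{R}$ uses $v,\psi_x\in H^2_N$, available since $K\ge 3$).

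First I would estimate the modulation functions. Differentiating \eqref{e:mod_slave2} and applying the $s_{p,N}$-derivative bounds of Proposition \ref{P:lin_est} (with $M\ge K+2$) to the data term $v(\cdot,0)$, using $\|v(\cdot,0)\|_{L^1_N}\le E_0$, and to $\mathcal{N}(s)$ inside the Duhamel integral produces convolutions $\int_0^t(1+t-s)^{-1/4-(l+m)/2}\zeta^2(1+s)^{-3/2}\,ds$; the slowest-decaying case $l+m=1$ (that is, $\partial_x\psi$ and $\partial_t\psi$) contributes $\lesssim \zeta^2(1+t)^{-3/4}$ and all higher derivatives decay faster. The cutoff-derivative ($\chi'$) terms that appear when differentiating in $t$ are supported in $s\in[1/2,1]$, where the weight $(1+s)^{3/4}$ is harmless and the contribution is $\lesssim E_0+\zeta^2$. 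This yields $\|\psi_x(s)\|_{H^{K+1}_N},\ \|\psi_t(s)\|_{H^K_N}\lesssim (E_0+\zeta^2(t))(1+s)^{-3/4}$, and the bound on $\gamma_t$ follows the same way from $|\langle\widetilde{\Phi}_0,\mathcal{N}(s)\rangle_{L^2_N}|\le\|\widetilde{\Phi}_0\|_{L^\infty}\|\mathcal{N}(s)\|_{L^1_N}$.

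Next I would close the estimate on $v$. For $t\ge 1$, \eqref{e:v2} reads $v=\psi_x v+\widetilde{S}(t)v(\cdot,0)+\int_0^t\widetilde{S}(t-s)\mathcal{N}(s)\,ds$; since $\|\psi_x\|_{L^\infty}\lesssim\|\psi_x\|_{H^{K+1}_N}\le\zeta(t)\le\eps$, the term $\psi_x v$ is absorbed into the left-hand side, and the residual bound of Proposition \ref{P:lin_est} combined with the nonlinearity estimate above gives
\[
\|v(t)\|_{L^2_N}\lesssim E_0(1+t)^{-3/4}+\int_0^t\left[(1+t-s)^{-3/4}+e^{-\eta(t-s)}\right]\zeta^2(1+s)^{-3/2}\,ds\lesssim (E_0+\zeta^2(t))(1+t)^{-3/4},
\]
while for $0\le t\le 1$ the same bound follows from the $N$-uniform short-time semigroup bounds on $e^{\mathcal{L}[\phi]t}$. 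Feeding this $L^2_N$-decay, together with the $\psi$- and $\gamma_t$-bounds just obtained, into the nonlinear damping inequality of Proposition \ref{P:nonlin_damp} --- whose smallness hypotheses on $v,\psi_x,\psi_t,\gamma,\gamma_t$ are all implied by $\zeta(T)\le\eps$ --- and evaluating $\int_0^t e^{-\theta(t-s)}(E_0+\zeta^2(t))^2(1+s)^{-3/2}\,ds\lesssim (E_0+\zeta^2(t))^2(1+t)^{-3/2}$ yields $\|v(t)\|_{H^K_N}\lesssim (E_0+\zeta^2(t))(1+t)^{-3/4}$. Multiplying the four resulting bounds by $(1+s)^{3/4}$ and taking $\sup_{0\le s\le t}$ gives $\zeta(t)\le C(E_0+\zeta^2(t))$, with $C$ independent of $N$ throughout.

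The main obstacle is the loss of derivatives: recovering $v$ in $L^2_N$ from \eqref{e:v2} requires controlling $v$ --- via $\mathcal{R}_x$ and via $\psi_x v$ --- in $H^2_N$, so the $L^2_N$- and $H^K_N$-estimates cannot be decoupled. This is precisely what Proposition \ref{P:nonlin_damp} is built to overcome, trading the top-order Sobolev norm of $v$ for its $L^2_N$-norm at the price of an exponentially weighted history integral; the real work is therefore the careful orchestration of the bootstrap (the $L^2_N$-decay feeding the damping estimate, and the damping estimate feeding the next round), while checking that every Sobolev embedding, product estimate, and convolution bound carries a constant independent of $N$ --- the feature that distinguishes the present subharmonic analysis from the classical fixed-period one. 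Once that uniformity is secured, the inequality for $\zeta$ is a mechanical assembly of the pieces.
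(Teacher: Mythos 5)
Your proposal is correct and follows essentially the same route as the paper: quadratic estimates on $\mathcal{Q}$, $\mathcal{R}$, and hence $\mathcal{N}$ yielding $(1+s)^{-3/2}$ decay; application of the $s_{p,N}$ and $\widetilde{S}_N$ bounds from Proposition~\ref{P:lin_est} to the slaved system \eqref{e:mod_slave2}--\eqref{e:v2} to extract $(1+t)^{-3/4}$ decay for $\|v\|_{L^2_N}$, $\|\psi_x\|_{H^{K+1}_N}$, $\|\psi_t\|_{H^K_N}$ and $(1+t)^{-3/2}$ for $|\gamma_t|$ via the $L^\infty$--$L^1$ pairing with $\widetilde{\Phi}_0$; and closure through the nonlinear damping estimate of Proposition~\ref{P:nonlin_damp} to recover the $H^K_N$ norm of $v$. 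The only cosmetic difference is that you absorb the $\psi_x v$ term into the left-hand side using $\|\psi_x\|_{L^\infty}\le\eps$, whereas the paper simply bounds it by $\zeta^2(1+t)^{-3/2}$ and moves on; both are valid, and the remainder of the argument is the same.
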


\begin{proof}
Recalling Lemma \ref{L:nlin_pert} we readily see that there exists a constant $C>0$, independent of $N$, such that
\[
    \|\mathcal{Q}(t)\|_{L^1_N\cap H^1_N} \leq C \left(1+\|\psi_x(t)\|_{H^1_N}\right)\|v(t)\|_{H^1_N}^2 
\]
and
\[
    \|\mathcal{R}(t)\|_{L^1_N\cap H^1_N}\leq C \left(\|(v, v_x, \psi_x, \psi_{xx}, \psi_t)(t)\|_{H^1_N}^2 + |\gamma_t(t)|^2\right) 
\]
so that, using the linear estimates in Proposition \ref{P:lin_est}, we have for so long as $\zeta(t)$ remains small that
\[
  \|\mathcal{Q}(t)\|_{L^1_N\cap H^1_N},~~ \|\mathcal{R}(t)\|_{L^1_N\cap H^1_N}\leq C \zeta^2(t)(1+t)^{-3/2}
\]
for some constant $C>0$ which is independent of $N$.  Since $k\mathcal{N} = \mathcal{Q} + k\mathcal{R}_x$, it follows there exists a constant $C>0$ independent of $N$ such that
\begin{equation}\label{e:Nbd}
    \|\mathcal{N}(t)\|_{L^1_N\cap H^1_N} \leq C \|(v, v_x, v_{xx}, \psi_x, \psi_{xx}, \psi_{xxx}, \psi_t, \psi_{tx})(t)\|_{H^1_N}^2 + |\gamma_t(t)|^2 \leq C \zeta^2(t)(1+t)^{-3/2}.
\end{equation}
for so long as $\zeta(t)$ remains small.  Applying the bounds in Proposition \ref{P:lin_est} to the implicit equation \eqref{e:v2}, it immediately follows 
that
\begin{align*}
\left\|v(t)\right\|_{L^2_N}&\leq \left\|v(\cdot,t)\psi_x(t)\right\|_{L^2_N}+CE_0(1+t)^{-3/4}
	+C\int_0^t(1+t-s)^{-3/4}\left\|\mathcal{N}(s)\right\|_{L^1_N\cap L^2_N}ds\\
&\leq \zeta(t)^2(1+t)^{-3/2}+CE_0(1+t)^{-3/4}+C\zeta(t)^2\int_0^t(1+t-s)^{-3/4}(1+s)^{-3/2}ds	\\
&\leq C\left(E_0+\zeta(t)^2\right)(1+t)^{-3/4}
\end{align*}
for some constant $C>0$ independent of $N$.  In particular, observe the loss of derivatives in the above estimate: control of the $L^2_N$ norm of 
$v(t)$ requires control of the $H^K_N$ norm of $v(t)$.  This loss of derivatives may be compensated by the nonlinear damping estimate in Proposition \ref{P:nonlin_damp},
assuming we can obtain appropriate estimates on the modulation functions and their derivatives.

To this end, we observe that  by using \eqref{e:mod_slave2} for $0\leq \ell\leq K+1$ we have that
\[
    \d_x^\ell\psi_x(x,t) = \chi(t)\left(\d_x^{\ell+1}s_{p,N}(t)v(0) + \int_0^t \d_x^{\ell+1}s_{p,N}(t-s)\mathcal{N}(s)ds\right),
\]
and for $0\leq \ell \leq K$
\begin{align*}
    \d_x^\ell\psi_t(x,t) &= \chi(t)\left(\d_x^{\ell}\d_t [s_{p,N}](t)v(0) + \d_x^\ell s_{p,N}(0)\mathcal{N}(t) + \int_0^t \d_x^{\ell}\d_t[s_{p,N}](t-s)\mathcal{N}(s)ds\right)\\
    		&\quad +\chi'(t)\left(\d_x^\ell s_{p,N}(t)v(0)+\int_0^t\d_x^{\ell}s_{p,N}(t-s)\mathcal{N}(s)ds\right),
\end{align*}
and hence that
\[
\left\|\psi_x\right\|_{H^{K+1}_N}, \left\|\psi_t\right\|_{H^{K}_N}\leq C\left(E_0+\zeta(t)^2\right)(1+t)^{-3/4}.
\]
Similarly, using \eqref{e:mod_slave2}(i) we find\footnote{Note here we use an $L^\infty$-$L^1$ bound to control the inner product.  This is opposed to using
Cauchy-Schwartz, which would contribute the growing factor $\|\widetilde{\Phi}_0\|_{L^2_N}=\mathcal{O}(N)$.}
\[
|\gamma_t(t)| =\left|\LA \widetilde{\Phi}_0, \mathcal{N}(t)\RA_{L^2_N}\right|\leq  C\|\mathcal{N}(t)\|_{L^1_N}\leq  C\left(E_0+ \zeta^2(t)\right)(1+t)^{-3/2}.
\]
Using the damping result in Proposition \ref{P:nonlin_damp}, we conclude that
\begin{equation}\label{e:vbd}
\begin{aligned}
    \|v(t)\|_{H^K_N}^2 &\leq C E_0^2 e^{-\theta t} + C\left(E_0 + \zeta^2(t)\right)^2\int_0^t e^{-\theta(t-s)} (1+s)^{-3/2}ds\\
    &\leq C E_0^2 e^{-\theta t} + C\left(E_0 + \zeta^2(t)\right)^2 (1+t)^{-3/2}\\
    &\leq C\left(E_0 + \zeta^2(t)\right)^2 (1+t)^{-3/2}.
\end{aligned}
\end{equation}
Since $\zeta(t)$ is a non-decreasing function, it follows that for a given $t\in(0,T)$ we have
\[
\left( \left\|v(s)\right\|_{H^K_N}^2 + \left\|\psi_x(s)\right\|_{H^{K+1}_N}^2 + \left\|\psi_t(s)\right\|_{ H^{K}_N}^2 + \left|\gamma_t(s)\right|^2\right)^{1/2}(1+s)^{3/4}
\leq C\left(E_0 + \zeta^2(t)\right)^2 
\]
valid for all $s\in(0,t)$.  Taking the supremum over $s\in(0,t)$ completes the proof.
\end{proof}

The proof of Theorem \ref{T:main} now follows by continuous induction.  Indeed, 
$\zeta(t)$ is continuous so long as it remains small, Proposition \ref{P:iteration} implies that if $E_0<\frac{1}{4C}$ then $\zeta(t)\leq 2CE_0$ for all $t\geq 0$.  Noting 
that $C>0$ is independent of $N$, this establishes the stability estimates \eqref{e:result1} from Theorem \ref{T:main} by taking
\[
\widetilde{\psi}(x,t):=\frac{1}{N}\gamma(t)+\psi(x,t).
\]
Further, the stability estimate \eqref{e:result3} in Corollary \ref{C:main} follows by \eqref{e:vbd} and the triangle inequality since
\begin{align*}
\left\|u\left(\cdot-\frac{1}{N}\gamma(t),t\right)-\phi\right\|_{L^2_N}&\leq \|u_x\|_{L^\infty}\|\psi(x,t)\|_{L^2_N}+CE_0(1+t)^{-3/4}\\
&\leq CE_0\left(1+t\right)^{-1/4},
\end{align*}
as claimed.  Further, note that since for $0<t<s$ we have
\[
|\gamma(t)-\gamma(s)|\leq\int_t^s|\gamma_t(z)|dz\leq CE_0(1+t)^{-1/2}
\]
it follows that $\gamma(t)$ converges to some\footnote{Note since the modulation function $\gamma$ depends
on $N$, so does the limiting phase shift $\gamma_\infty$.} $\gamma_\infty\in\RM$ as $t\to\infty$ with rate 
\[
|\gamma(t)-\gamma_\infty|\leq\int_t^\infty|\gamma_t(z)|dz\leq CE_0(1+t)^{-1/2},
\]
which, by the triangle inequality, establishes \eqref{e:result2}, thus completing the proof of Theorem \ref{T:main}, as well as 
completes the proof of Corollary \ref{C:main}.
In fact, notice that from \eqref{e:Nbd} we have
\[
\left|\int_0^t\left<\widetilde{\Phi}_0,\mathcal{N}(s)\right>_{L^2_N}ds\right| \leq  C\left\|\widetilde{\Phi}_0\right\|_{L^\infty(\mathbb{R})}\zeta^2(t)\int_0^t (1+s)^{-3/2}ds,
\]
which, since the above work shows that $\zeta(t)\leq C\eps$ for some constant $C>0$, implies from \eqref{e:mod_slave2} that
\[
\gamma_\infty = \left<\widetilde{\Phi}_0,v(0)\right>_{L^2_N} + \mathcal{O}(\eps^2).
\]
That is, the asymptotic phase shift in Theorem \ref{T:main} is $\mathcal{O}(\eps^2)$ close to that  suggested by the linear theory in Section \ref{S:lin_stab}.

Finally, we combine Corollary \ref{C:main} with Proposition \ref{P:sub_stab} to establish Corollary \ref{C:min_thm}.  To this end, 
let $\eps>0$ and $C>0$ be as in Corollary \ref{C:main}.  Fix $N\in\NM$ and $\delta\in(0,\delta_N)$, with $\delta_N$ as in \eqref{spec_gap}, and let $\eps_\delta>0$ be as in 
Proposition \ref{P:sub_stab}.   If $u_0\in L^1_{\rm per}(0,N)\cap H^K_{\rm per}(0,N)$ with $E_0<\eps$, then Corollary \ref{C:main} implies that
\[
\left\|u\left(\cdot,t\right)-\phi\left(\cdot+\frac{1}{N}\gamma_\infty\right)\right\|_{H^1_{\rm per}(0,N)}\leq CE_0 (1+t)^{-1/4}
\]
for all $t>0$.  In particular, there exists a time $T_\delta>0$ such that
\[
\left\|u\left(\cdot,t\right)-\phi\left(\cdot+\frac{1}{N}\gamma_\infty\right)\right\|_{H^1_{\rm per}(0,N)}<\eps_\delta
\]
for all $t\geq T_\delta$.  By the translational invariance of \eqref{e:RDE_trav} it is clear that $\phi(\cdot+\frac{1}{N}\gamma_\infty)$ is a diffusively
spectrally stable $1$-periodic solution of \eqref{e:rd}, and hence Proposition \ref{P:sub_stab} implies\footnote{Here, we are applying Proposition \ref{P:sub_stab}
with initial data $u(\cdot,T_\delta)$.} there exists a constant
$C_\delta>0$ such that
\[
\left\|u\left(\cdot,t\right)-\phi\left(\cdot+\frac{1}{N}\gamma_\infty\right)\right\|_{H^1_{\rm per}(0,N)}\leq C_\delta\eps_\delta e^{-\delta t}
\]
for all $t>T_\delta$.  Taking $M_\delta=\frac{C_\delta\eps_\delta}{E_0}$ completes the proof.

\appendix
\section{Bounds on Discrete Sums}

In order to establish the uniform linear bounds in Proposition \ref{P:lin_est}, we need to establish uniform-in-$N$ bounds on finite sums of the form
\[
    \frac{1}{N}\sum_{\xi\in\Omega_N\setminus\{0\}}\xi^{2r} e^{-2d\xi^2 t}
\]
where $N\in\NM$.  Following the ideas in \cite{HJP_1}, we note that the above finite sum is, up to a simple rescaling, a Riemann
sum approximation for the integral 
\[
    \int_{-\pi}^{\pi} \xi^{2r}e^{-2d\xi^2 t}d\xi
\]
which, through an elementary scaling argument, exhibits $(1+t)^{-r-1/2}$ decay for large time.  Using this as  motivation, we now establish
the following key estimate.

\begin{lemma}\label{L:sum_poly_bd}
Let $d>0$ and $r\in\N\cup\{0\}$ be given.  Then there exists a constant $C>0$, independent of $N$, such that for every $N\in\NM$ we have
\[
\frac{1}{N}\sum_{\xi\in\Omega_N\setminus\{0\}}\xi^{2r} e^{-2d\xi^2 t}\leq C(1+t)^{-r-1/2},
\]
valid for all $t\geq 0$.
\end{lemma}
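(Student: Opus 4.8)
The plan is to recognize the left-hand side as, up to a fixed multiplicative constant, a Riemann sum with mesh $h_N:=2\pi/N$ for the integral $\int_0^\infty \xi^{2r}e^{-2d\xi^2 t}\,d\xi$, and then to control that Riemann sum by the integral uniformly in $N$. The only genuine difficulty is that the integrand $g(\xi):=\xi^{2r}e^{-2d\xi^2 t}$ is not monotone on $[0,\infty)$ when $r\ge 1$ — it peaks at $\xi_\ast=\sqrt{r/(2dt)}$ — so comparing the Riemann sum directly with the integral produces an error term of order $h_N\|g\|_{L^\infty}\sim N^{-1}t^{-r}$, which fails to decay at the required rate $t^{-r-1/2}$ once $t$ is large compared to $N^2$. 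I would circumvent this by reducing everything to the monotone case $r=0$.

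For $r\ge 1$ and $t>0$, the first step is the pointwise bound $\xi^{2r}e^{-2d\xi^2 t}\le\bigl(\sup_{\eta\in\RM}\eta^{2r}e^{-d\eta^2 t}\bigr)\,e^{-d\xi^2 t}=(r/(edt))^{r}\,e^{-d\xi^2 t}$, which gives
\[
\frac1N\sum_{\xi\in\Omega_N\setminus\{0\}}\xi^{2r}e^{-2d\xi^2 t}\ \le\ \left(\frac{r}{edt}\right)^{r}\,\frac1N\sum_{\xi\in\Omega_N\setminus\{0\}}e^{-d\xi^2 t}.
\]
The remaining sum — which is also exactly the $r=0$ case — is then handled directly: since $\Omega_N\setminus\{0\}\subset\{2\pi j/N:j\in\ZM\setminus\{0\}\}$ has mesh $h_N=2\pi/N$ and $\xi\mapsto e^{-d'\xi^2 t}$ is decreasing on $[0,\infty)$, the left-endpoint comparison $e^{-d'(jh_N)^2 t}h_N\le\int_{(j-1)h_N}^{jh_N}e^{-d'\xi^2 t}\,d\xi$ together with evenness yields
\[
\frac1N\sum_{\xi\in\Omega_N\setminus\{0\}}e^{-d'\xi^2 t}\ \le\ \frac{2}{N h_N}\int_0^\infty e^{-d'\xi^2 t}\,d\xi\ =\ \frac1\pi\int_0^\infty e^{-d'\xi^2 t}\,d\xi\ =\ \frac{1}{2\sqrt{\pi d' t}},
\]
with constant independent of $N$ (applied with $d'=d$ above, or $d'=2d$ directly when $r=0$). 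Combining the two displays produces $\frac1N\sum_{\xi\in\Omega_N\setminus\{0\}}\xi^{2r}e^{-2d\xi^2 t}\le C_{r,d}\,t^{-r-1/2}$ for all $t>0$ and all $N\in\NM$.

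It then remains only to patch in small times. For $0\le t\le 1$ one has the crude bound $\frac1N\sum_{\xi\in\Omega_N\setminus\{0\}}\xi^{2r}e^{-2d\xi^2 t}\le\frac1N(N-1)\pi^{2r}<\pi^{2r}$, using $|\xi|<\pi$ on $\Omega_N$; since $(1+t)^{-r-1/2}\ge 2^{-r-1/2}$ on $[0,1]$ this already has the desired form, while for $t\ge 1$ one has $t^{-r-1/2}\le 2^{r+1/2}(1+t)^{-r-1/2}$. Taking $C:=2^{r+1/2}\max\{\pi^{2r},C_{r,d}\}$ then establishes the claimed estimate for all $t\ge 0$ and all $N\in\NM$. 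The only place any real thought is needed is the reduction to the monotone exponent $r=0$; everything else is an elementary integral comparison.
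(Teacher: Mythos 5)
Your proof is correct, and it rests on the same core idea as the paper's: interpret the sum as a Riemann sum, use monotonicity to bound it by a Gaussian integral, and use a separate crude bound $\pi^{2r}$ for small $t$ to convert $t^{-r-1/2}$ into $(1+t)^{-r-1/2}$. Where you diverge is in the reduction from $r\geq 1$ to the monotone case: you peel off the sup via the pointwise inequality $\xi^{2r}e^{-2d\xi^2 t}\leq(r/(edt))^r e^{-d\xi^2 t}$, whereas the paper majorizes $f(\xi,t)=\xi^{2r}e^{-2d\xi^2 t}$ by the flat-capped function $G(\xi,t)$ equal to $f(R,t)$ on $|\xi|\leq R$ and to $f(\xi,t)$ for $|\xi|>R$, with $R=\sqrt{r/(2d)}\,t^{-1/2}$ the location of the peak, and then runs the Riemann-sum comparison against $G$. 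Your version is a bit cleaner: it avoids introducing an auxiliary function and runs the monotone Riemann-sum comparison exactly once (in the $r=0$ form), at the harmless cost of halving the Gaussian rate from $2d$ to $d$. The paper's flat-cap yields the same rate and keeps $2d$ intact, but requires re-verifying monotonicity of $G$ and an explicit bound on $2R\,f(R,t)$. Both routes need the same short-time patch you supply. One cosmetic slip: you call $e^{-d'(jh_N)^2 t}h_N\leq\int_{(j-1)h_N}^{jh_N}e^{-d'\xi^2 t}\,d\xi$ a ``left-endpoint'' comparison, but $jh_N$ is the right endpoint of that subinterval, so it is a right-endpoint under-approximation for a decreasing integrand; the inequality as written is correct.
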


\begin{proof}
First, consider the case when $r=0$ and note that, for each $t>0$, the function $\xi\mapsto e^{-2d\xi^2t}$ is even and monotonically decreasing for $\xi>0$.  
Together with the equality $\xi_j - \xi_{j-1} = 2\pi/N$, monotonicity allows us to treat the sum over $\xi\in\Omega_N$, $\xi>0$ as a right-endpoint Riemann sum (i.e. an under-approximation).  Parity then tells us the sum over $\xi\in\Omega_N$, $\xi<0$ is also an under-approximation, yielding
\[
    \frac{1}{N}\sum_{\xi\in\Omega_N\setminus\{0\}}e^{-2d\xi^2 t} \leq \frac{1}{2\pi} \int_{-\pi}^{\pi} e^{-2d\xi^2 t}d\xi \lesssim (1+t)^{-1/2}.
\]  

For $r\geq 1$, the analysis is complicated by the fact that the function
\[
    f(\xi,t) := \xi^{2r}e^{-2d\xi^2 t},
\]
defined for $\xi\in\RM$ and $t>0$, is not monotonically decreasing for $\xi>0$.  However, we may use similar analysis via the following procedure.

First, observe that, for fixed $t>0$, $f(\cdot,t)$ has a global minimum at 0 and global maxima at
\[
    \pm R := \pm\left(\frac{r}{2d}\right)^{1/2}t^{-1/2},~~{\rm with}~~ f(\pm R,t) = \left(\frac{r}{2de}\right)^{r}t^{-r}.
\]
If $0 < t \leq r/(2d\pi^2)$, then $R\geq \pi$ so that $\pm R \notin (-\pi,\pi)$.  We can then easily estimate the sum
\begin{equation}\label{e:Rsum_shorttime}
    \frac{1}{N}\sum_{\xi\in\Omega_N\setminus\{0\}} \xi^{2r}e^{-2d\xi^2 t} \leq \frac{1}{N} \sum_{\xi\in\Omega_N\setminus\{0\}} \pi^{2r} \leq \pi^{2r}.
\end{equation}
For $t > r/(2d\pi^2)$, we define the auxiliary function
\[
    G(\xi,t) :=\begin{cases}
    f(R,t), & |\xi|\leq R\\
    f(\xi,t), & |\xi|> R
    \end{cases}.
\]
Notice that $G(\cdot,t)$ is even and monotonically decreasing for $\xi>0$.  Furthermore, notice that 
\[
    \int_{-\pi}^{\pi} G(\xi,t)\, d\xi \leq 2e^{1/2}\left(\frac{r}{2de}\right)^{r+1/2}t^{-r-1/2} + \int_{-\pi}^{\pi} f(\xi,t)\, d\xi \lesssim (1+t)^{-r-1/2},
\]
where the last inequality follows from \eqref{e:Rsum_shorttime}.  Consequently, we may modify the monotonicity trick from the $r=0$ case to obtain
\[
    \frac{1}{N}\sum_{\xi\in\Omega_N\setminus\{0\}} \xi^{2r}e^{-2d\xi^2 t} \leq \frac{1}{2\pi} \int_{-\pi}^{\pi} G(\xi,t)\, d\xi \lesssim (1+t)^{-r-1/2}.
\]
\end{proof}

In \cite{HJP_1}, the authors further established that, in the cases $r=0$ and $r=1$, the decay rate in Lemma \ref{L:sum_poly_bd} is indeed sharp, providing also a uniform lower bound for
the corresponding finite sums.  A similar analysis applied to the present situation establishes the sharpness of these bounds for all $r\geq 0$.  While not necessary in the present
analysis, it provides yet a deeper connection between the current uniform analysis of subharmonic perturbations and the ``limiting'' localized theory.

\bibliographystyle{abbrv}
\bibliography{RD}
\end{document}